\newtheorem{theorem}{Theorem}[section]
\newtheorem{cor}[theorem]{Corollary}
\newtheorem{lemma}[theorem]{Lemma}
\newtheorem{prop}[theorem]{Proposition}
\theoremstyle{definition}
\theoremstyle{remark}
\numberwithin{equation}{subsection}
\theoremstyle{plain}
\newtheorem{question}{Question}
\def\Z{\mathbb Z}
\def \N { {\rm N}}
\def \pw{{\rm pw}}
\newcommand{\secref}[1]{Section~\ref{#1}}
\newcommand{\thmref}[1]{Theorem~\ref{#1}}
\newcommand{\lemref}[1]{Lemma~\ref{#1}}
\newcommand{\corref}[1]{Corollary~\ref{#1}}
\newcommand{\eqnref}[1]{~{\textrm(\ref{#1})}}
\numberwithin{equation}{section}
\begin{document}

\title[PALINDROMIC WIDTH OF FINITELY GENERATED SOLVABLE GROUPS]{PALINDROMIC WIDTH OF FINITELY GENERATED SOLVABLE GROUPS}

\author{Valeriy ~G.~Bardakov}
\address{Sobolev Institute of Mathematics, Novosibirsk State University, Novisibirsk 630090, Russia}
\address{and}
 \address{Laboratory of Quantum Topology, Chelyabinsk State University, Brat’evKashirinykh street 129, Chelyabinsk 454001, Russia}
\email{bardakov@math.nsc.ru}
\author{Krishnendu Gongopadhyay}
\address{Department of Mathematical Sciences, Indian Institute of Science Education and Research (IISER) Mohali,
Knowledge City, Sector 81, S.A.S. Nagar, P.O. Manauli 140306, India}
\email{krishnendu@iisermohali.ac.in, krishnendug@gmail.com}
\subjclass[2000]{Primary 20F16; Secondary 20F65, 20F19, 20E22}
\keywords{palindromic width, solvable groups, metabelian groups, wreathe products}

\thanks{The authors gratefully acknowledge the support of the Indo-Russian DST-RFBR project grant DST/INT/RFBR/P-137}
\thanks{Bardakov is partially supported by Laboratory of Quantum Topology of Chelyabinsk State University(Russian Federation government grant 14.Z50.31.0020) }

\date{\today}


\begin{abstract}
We investigate the palindromic width of finitely generated solvable groups. We prove  that every finitely
generated $3$-step solvable group has finite palindromic width. More generally, we show the finiteness of
palindromic width for finitely generated abelian-by-nilpotent-by-nilpotent groups. For arbitrary solvable
groups of step $\geq 3$, we prove that if  $G$ is a finitely generated solvable group that is an extension of an abelian group by a group satisfying the maximal condition for normal subgroups, then the palindromic width of $G$ is finite. We also prove that the palindromic width of $Z \wr \Z$ with respect to the set of standard  generators is $3$.
\end{abstract}
\maketitle

\section{Introduction}
Let $G$ be a group with a set of generators $X$. A reduced word in the alphabet $X^{\pm 1}$
is a \emph{palindrome} if it reads the same forwards and backwards.
The palindromic length $l_{\mathcal P}(g)$
of an element $g$ in $G$ is the minimum number $k$ such that $g$ can be expressed as a product of $k$
palindromes. The \emph{palindromic width} of $G$ with respect to $X$ is defined to
be ${\rm pw}(G, X) = \underset{g \in G}{\sup} \ l_{\mathcal{P}}(g)$. When there is no confusion about the underlying generating set $X$, we simply denote the palindromic width with respect to $X$ by ${\rm pw}(G)$.
Palindromes in groups have already proved useful in studying various aspects
of combinatorial group theory and geometry, for example see  \cite{BST}--\cite{de2}, \cite{gk1}--\cite{hel},  \cite{P}.

\medskip  For $g$, $h$ in $G$, the \emph{commutator} of $g$ and $h$ is defined as $[g,
h]=g^{-1} h^{-1}gh$. If $\mathcal{C}$ is the set of commutators in some group
$G$ then the commutator  subgroup $G'$ is generated by $\mathcal{C}$.
The \emph{commutator length} $l_{\mathcal{C}}(g)$ of an element $g \in G'$ is the minimal number $k$ such that $g$ can be expressed as a product of $k$ commutators.  The \emph{commutator width} of $G$ is defined by $\underset{g \in G}{\sup} \ l_{\mathcal{\mathcal C}}(g)$ and is
denoted by ${\rm cw}(G)$. It is well known \cite{R} that the commutator width of a
free non-abelian group is infinite, but the commutator width of a finitely
generated nilpotent group is finite (see \cite{AR,AR1}). An algorithm of the
computation of the commutator length in free non-abelian groups can be found in
\cite{B}.

\medskip In \cite{BG}, we initiated the investigation of finding the palindromic width of a finitely generated
group that is free in some variety of groups. We proved that the palindromic width of a finitely generated
free nilpotent group is finite. Further using a result of Akhavan-Malayeri and Rhemtulla \cite{MR}, we proved
that $\pw(F_n(\mathcal U \mathcal N), X) \leq 5n$, where $\mathcal U$ is the variety of abelian groups,
 $\mathcal N$ is the variety of nilpotent groups and $X=\{x_1, \ldots, x_n\}$ is a basis of the group
 $F_n(\mathcal U \mathcal N)$ that is free in $\mathcal U \mathcal N$.   In our work we also observed that
 there is a surprising analogy between commutator widths and palindromic widths in groups, though we did not
 have a clear picture about the relationship. We asked \cite[Problem 2]{BG} a question to obtain a connection between the two widths. In this paper we also investigated this question and obtained further results in this direction, see \secref{cwpw}.

\medskip The main aim of this paper is to investigate the palindromic width of some solvable groups and the
wreath product of cyclic groups. We found a striking analogy of the palindromic widths and commutator widths of solvable groups.   Rhemtulla \cite[Theorem 1]{R1}  proved that if $A$ is a normal abelian subgroup of a solvable group $G$ such
that $G/A$ satisfies the maximal condition for normal subgroups, then ${\rm cw}(G) < \infty$. The following is an analogous theorem for palindromes.
\begin{theorem}\label{sol1}
Let $A$ be a normal abelian subgroup of a finitely generated solvable group $G=\langle X \rangle$ such that
$G/A$ satisfies the maximal condition for normal subgroups. Then $\pw(G, X) < \infty$.
\end{theorem}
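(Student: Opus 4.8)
\textit{The plan.} I will argue by induction on the derived length $d$ of the quotient $Q=G/A$. If $d=0$ then $G=A$ is finitely generated abelian and $\pw(G,X)\le |X|$, since every $x_i^{k}$ is a palindrome. For the inductive step the engine is the following ``fibering'' observation, which I will apply with $N=A$: if $N\trianglelefteq G$, every element of $N$ is a product of at most $k$ palindromes of $G$, and $\pw(G/N,XN/N)\le m$, then $\pw(G,X)\le m+k$. (A reduced palindromic word in the images of $X$ lifts formally to a reduced palindromic word in $X$, so from $gN=\bar p_1\cdots\bar p_m$ we get $g=p_1\cdots p_m n$ with the $p_i$ palindromes of $G$ and $n\in N$, and $n$ costs at most $k$ more.) Thus it suffices to prove (i) $\pw(G/A,\bar X)<\infty$, and (ii) there is a constant $k=k(G,X)$ with $A\subseteq P_k(G,X)$, where $P_k$ denotes the set of products of at most $k$ palindromes.

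\textit{Step (i).} The group $Q=G/A$ is finitely generated and solvable, and being a quotient of $G$ it still satisfies the maximal condition for normal subgroups. Let $\bar A=Q^{(d-1)}$ be the last nontrivial term of its derived series: it is a normal abelian subgroup of $Q$, the quotient $Q/\bar A$ has derived length $d-1$ and again satisfies max-$n$. So the inductive hypothesis (applied to the pair $(Q,\bar A)$, whose parameter is $d-1$) gives $\pw(G/A,\bar X)<\infty$.

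\textit{Step (ii), the heart of the proof.} Two facts drive this. First, since $G$ is finitely generated and $G/A$ satisfies max-$n$, the abelian group $A$, viewed under the conjugation action as a module over $\Z[Q]$, is Noetherian and in particular finitely generated; this is a theorem of P.\ Hall on soluble groups. Second, there are cheap palindromic expressions for conjugates and commutators: writing elements of $G$ as words in $X^{\pm 1}$ and letting $\overleftarrow{v}$ denote the reversed word, one has that $\overleftarrow{w}\,p\,w$ is a palindrome whenever $p$ is, whence $g^{-1}pg=\bigl(g^{-1}p(\overleftarrow{g})^{-1}\bigr)\bigl(\overleftarrow{g}\,g\bigr)$ is a product of two palindromes, and for $x\in X$, $g\in G$, $k\in\Z$,
\[
[g,x^{k}]\;=\;\bigl(g^{-1}x^{-k}(\overleftarrow{g})^{-1}\bigr)\,\bigl(\overleftarrow{g}\,g\bigr)\,x^{k}
\]
is a product of three palindromes. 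Now write $A=\langle a_1,\dots,a_m\rangle_{\Z[Q]}$. Because $A$ is abelian, a product of conjugates of elements of $A$, once sorted according to which $x_i^{\pm 1}$ is conjugating, collapses (the map $b\mapsto b^{-1}b^{\,x_i^{\pm1}}$ is an endomorphism of $A$): every element of $[A,G]$ has the form $\prod_i\prod_{\varepsilon=\pm1}b_{i\varepsilon}^{-1}b_{i\varepsilon}^{\,x_i^{\varepsilon}}$ with $b_{i\varepsilon}\in A$, and each factor $b^{-1}x^{-\varepsilon}bx^{\varepsilon}$ is a product of at most three palindromes by the identity above; hence $[A,G]\subseteq P_{6|X|}(G,X)$. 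It remains to handle the finitely generated abelian group $A/[A,G]$: one exhibits representatives of a generating set whose $k$-th powers cost only three palindromes of $G$ modulo $[A,G]$ (for instance of commutator type $[u,x]$, which lie in $A$ and for which $[u,x]^{k}\equiv[u,x^{k}]$ modulo $[A,G]$). Consequently every $a\in A$ is a product of a bounded number of palindromes of $G$, proving (ii).

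Combining (i) and (ii) via the fibering observation gives $\pw(G,X)\le\pw(G/A,\bar X)+k<\infty$. I expect the main obstacle to be precisely the \emph{uniformity} built into step (ii): a naive expansion of an element of $A$ as a product of conjugates of the module generators costs a number of palindromes that grows with the element, and it is the interplay of finite generation of $A$ as a $\Z[Q]$-module, the abelian regrouping trick, and the three-palindrome identity for $[g,x^{k}]$ that keeps the count bounded. A secondary point needing care is invoking the correct consequence of max-$n$ — finite generation of $A$ as an operator module — rather than Noetherianity of $\Z[Q]$ itself, which can fail when $Q$ is not polycyclic.
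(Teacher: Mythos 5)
Your Step (ii) fails at its first move, and the failure is structural rather than cosmetic. You claim that because $G$ is finitely generated and $G/A$ satisfies max-$n$, the module $A$ over $\Z[G/A]$ is Noetherian, ``a theorem of P.~Hall''. No such theorem exists in this generality, and the statement is false: max-$n$ for $Q=G/A$ constrains the normal subgroups of $G$ that \emph{contain} $A$ (these are the ones visible in $Q$), and says nothing about the $G$-submodules of $A$. Concretely, let $F$ be free of rank $2$ and take $G=F/F'''$, $A=G''=F''/F'''$, so that $Q=G/A$ is the free metabelian group of rank $2$, which satisfies max-$n$ by Hall but is not finitely presented. Then $A$ is exactly the relation module of the presentation $Q=F/F''$, and since $Q$ is not of type $FP_2$ (for metabelian groups this is equivalent to finite presentability, by Bieri--Strebel), $A$ is \emph{not} finitely generated over $\Z[Q]$; moreover $A/[A,G]=F''/[F'',F]\cong H_2(Q)$ by Hopf's formula, so there is no reason for it to be a finitely generated abelian group, and your plan of choosing finitely many coset representatives breaks down entirely. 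Even where $A/[A,G]$ happens to be finitely generated, the assertion that its generators can be taken ``of commutator type $[u,x]$'' is unjustified, since elements of $A$ need not lie in $G'$. Your correct ingredients --- the three-palindrome identity for $[g,x^{k}]$ and Rhemtulla's description of $[A,G]$, giving $\pw([A,G],X)\le 3n$ --- do appear in the paper, but they cannot carry the weight you place on them, because your fibering is over $A$ and therefore forces you to bound palindromic length on all of $A$.

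The paper's proof avoids exactly this trap. It fibers over $[A,G]$ rather than over $A$: after $\pw(G,X)\le\pw(G/[A,G])+3n$ one may assume $A\le Z(G)$, write $G=AM$ with $M$ generated by a minimal subset $Z\subseteq X$, and observe $G'=M'$; peeling off the abelianization $x_1^{\alpha_1}\cdots x_n^{\alpha_n}$ (each factor a palindrome) then reduces everything to bounding $\pw(M,Z)$, so one never needs palindromic expressions for general elements of $A$. The engine you are missing is Lemma~\ref{lt1}: if $H$ is the normal closure of finitely many elements $y_1,\dots,y_k$, then $\pw(G/H')<\infty$ if and only if $\pw(G/H'')<\infty$, the cost being controlled by $k$ and the word lengths of the $y_i$ via the homomorphisms $h\mapsto[h,y_i]$ after a further centralization. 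Max-$n$ of $G/A$ is then invoked where it genuinely applies: in an induction on the derived length of $M$, it produces a finitely normally generated $N$ with $AN=AM^{(r-1)}$, to which Lemma~\ref{lt1} is applied. To salvage your outline you would have to replace Step (ii) by some such device; as written, the proof does not go through.
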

We recall that a group $G$ is said to satisfy  \emph{the maximal condition for normal subgroups} if every normal subgroup of $G$ is the normal closure of a finite subset of $G$.
Our proof of the above theorem is based on the approach of Rhemtulla \cite{R1} where he studied commutator width in
finitely generated solvable groups. Essentially, we have been able to transfer the language of commutator
width into that of palindromes.  Rhemtulla \cite{R1} also obtained finiteness of the commutator width of
free $3$-step   solvable groups.  In this paper,  we obtain analogous results for palindromic width of free
$3$-step solvable groups. We further investigated the palindromic width of free abelian-by-nilpotent-by-nilpotent
groups.   We prove the following theorem in \secref{solv3}.
\begin{theorem}\label{unn}
Let $G$ be a finitely generated group in the variety  $\; \mathcal U \mathcal N \mathcal N$. Then the palindromic
width of $G$ is finite.
\end{theorem}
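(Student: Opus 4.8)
The plan is to argue by induction on the derived length $d = d(G)$, using \thmref{sol1} as the base of the induction and a lifting argument through abelian normal subgroups for the inductive step. For the base case, suppose $d \le 3$, so that $G$ is $3$-step solvable. Then $G''$ is abelian and characteristic in $G$, and $G/G''$ is a finitely generated metabelian group, hence satisfies the maximal condition for normal subgroups (this is the classical fact, going back to P.~Hall, that finitely generated abelian-by-polycyclic groups are max-$n$, and that the integral group ring of a polycyclic group is Noetherian). Applying \thmref{sol1} with $A = G''$ gives $\pw(G, X) < \infty$; in particular every finitely generated $3$-step solvable group has finite palindromic width.

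For the inductive step, let $d = d(G) \ge 4$, assume the statement for all finitely generated groups in $\mathcal U\mathcal N\mathcal N$ of smaller derived length, and put $A = G^{(d-1)}$. Then $A$ is abelian and characteristic in $G$, while $\bar G := G/A$ is finitely generated, lies in $\mathcal U\mathcal N\mathcal N$ (varieties are closed under quotients), and has derived length $d - 1$; so $\pw(\bar G, \bar X) < \infty$ by the inductive hypothesis, where $\bar X$ is the image of $X$. The reason for passing to the quotient is that a palindromic word over $\bar X^{\pm 1}$ lifts, letter by letter, to a palindromic word over $X^{\pm 1}$: if $gA = \bar p_1 \cdots \bar p_r$ with $\bar p_i$ palindromes in $\bar G$ and $r \le \pw(\bar G, \bar X)$, then lifting the $\bar p_i$ to palindromes $p_i$ of $G$ gives $g = p_1 \cdots p_r a$ with $a \in A$. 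Hence
\[
\pw(G, X) \;\le\; \pw(\bar G, \bar X) \;+\; \sup_{a \in A} l_{\mathcal P}(a),
\]
and the whole problem is reduced to bounding the palindromic length in $G$ of the elements of the abelian normal subgroup $A$.

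Bounding $l_{\mathcal P}$ on $A$ is the heart of the matter, and it is here that the hypothesis $G \in \mathcal U\mathcal N\mathcal N$ and the machinery behind \thmref{sol1} (together with the computations for finitely generated free nilpotent groups in \cite{BG}) are needed. The $G$-action on $A$ factors through $\bar G$; since $G$ is finitely generated and lies in $\mathcal U\mathcal N\mathcal N$ it is (abelian-by-nilpotent)-by-polycyclic, i.e.\ there is a normal subgroup $B$ with $B$ abelian-by-nilpotent and $G/B$ polycyclic, so that $\mathbb Z[G/B]$ is Noetherian. One uses this structure to show that the successive commutator submodules $[A,\,_k B]$ stabilise after finitely many steps (the nilpotent part of $B$ acting nilpotently on $A$, hence terminating) and that the relevant sections of $A$ are finitely generated $\mathbb Z[G/B]$-modules; feeding this into the collection-and-commutator argument of \thmref{sol1} — together with the elementary palindromic identities that a conjugate of a palindrome is a product of two palindromes and that a commutator of palindromes is a product of at most four palindromes — one writes an arbitrary element of $A$ as a product of a uniformly bounded number of palindromes of $G$. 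Combining this bound with the displayed inequality closes the induction.

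The main obstacle is exactly this last step: keeping the number of palindromes needed for the elements of $A$ bounded \emph{uniformly}, even though $A$ need not be finitely generated as a normal subgroup of $G$ and $G$ itself need not satisfy the maximal condition for normal subgroups — for instance $\mathbb Z \wr (\mathbb Z \wr \mathbb Z)$ lies in $\mathcal U\mathcal U\mathcal U \subseteq \mathcal U\mathcal N\mathcal N$ but is not max-$n$, so \thmref{sol1} cannot be applied to it in one shot. What rescues the argument is precisely the two features of the $\mathcal U\mathcal N\mathcal N$ hypothesis that are used above: closure under quotients, so that peeling the top abelian layer keeps us in the class with strictly smaller derived length; and the fact that the $G$-action on each successive abelian layer factors, up to a nilpotent (hence finitely terminating) part, through a polycyclic group whose group ring is Noetherian.
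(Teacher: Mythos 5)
Your base case is fine and is in fact a clean alternative derivation of \corref{sol3}: finitely generated metabelian groups satisfy max-$n$ by P.~Hall, so \thmref{sol1} applies with $A=G''$. The problem is the inductive step. You reduce everything to showing $\sup_{a\in A} l_{\mathcal P}(a)<\infty$ for $A=G^{(d-1)}$, correctly identify this as ``the heart of the matter'', and then do not prove it: the appeal to ``the collection-and-commutator argument of \thmref{sol1}'' is not available because, as you yourself note, $G/A$ need not satisfy max-$n$ (e.g.\ $\Z\wr(\Z\wr\Z)$), and the surrounding remarks about Noetherian group rings and stabilising commutator submodules never produce a uniform bound on the number of palindromes needed for an element of $A$. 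As written, the inductive step restates the difficulty rather than resolving it.

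The paper closes exactly this gap by \emph{not} quotienting by the abelian layer itself. Writing $A=\gamma_m(\gamma_l(G))$, it quotients by $[A,G]$ instead: by Rhemtulla's \lemref{reh1}, every element of $[A,G]$ is a single product $[a_1,x_1]\cdots[a_n,x_n]$ with $a_i\in A$, so $\pw([A,G],X)\le 3n$ with no Noetherian input at all, and \lemref{r1} gives $\pw(G)\le\pw(G/[A,G])+3n$. The point is that in the quotient the image of $A$ is \emph{central}, so it is absorbed into the nilpotent layer above it: $G/[A,G]\in\mathcal N_m\mathcal N_{l-1}$, i.e.\ the abelian layer disappears from the variety altogether rather than merely lowering the derived length by one. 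An induction on the inner nilpotency class (\lemref{lt2}), again peeling off abelian normal subgroups of the form $[\gamma_{m-1}(N),G]$ with the same Rhemtulla-type bound, reduces to $\mathcal L_n\cap\mathcal N_2\mathcal N$, and from there to the $5n$ bound for finitely generated $\mathcal U\mathcal N$ groups from \cite{BG}. If you want to salvage your scheme, replace ``quotient by $G^{(d-1)}$'' with ``quotient by $[G^{(d-1)},G]$'' and track how the variety, not just the derived length, changes.
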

As a corollary we have:
\begin{cor}\label{sol3}
Every finitely generated $3$-step solvable group has finite palindromic width.
\end{cor}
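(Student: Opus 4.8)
The plan is to deduce this corollary directly from \thmref{unn} by observing that the class of $3$-step solvable groups is a subvariety of $\;\mathcal{U}\mathcal{N}\mathcal{N}$. Recall that for group varieties $\mathcal{V},\mathcal{W}$ the product $\mathcal{V}\mathcal{W}$ is the class of all groups $G$ possessing a normal subgroup $N\in\mathcal{V}$ with $G/N\in\mathcal{W}$, and that this product is associative and monotone in each factor. With this notation, $\mathcal{U}\mathcal{U}\mathcal{U}$ is precisely the variety of $3$-step solvable groups, since $G\in\mathcal{U}\mathcal{U}\mathcal{U}$ if and only if the third derived subgroup $G'''$ is trivial.

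Next I would check the varietal inclusion $\mathcal{U}\mathcal{U}\mathcal{U}\subseteq\mathcal{U}\mathcal{N}\mathcal{N}$. Since every abelian group is nilpotent of class at most $1$, we have $\mathcal{U}\subseteq\mathcal{N}$, and monotonicity of the product in each factor then gives $\mathcal{U}\mathcal{U}\mathcal{U}\subseteq\mathcal{U}\mathcal{N}\mathcal{N}$. Concretely: if $G$ is $3$-step solvable and $A=G''$, then $A$ is a normal abelian subgroup of $G$ (its commutator subgroup is $G'''=\{1\}$) and $G/A$ is metabelian, hence abelian-by-abelian, hence in $\mathcal{N}\mathcal{N}$; therefore $G$ lies in $\mathcal{U}(\mathcal{N}\mathcal{N})=\mathcal{U}\mathcal{N}\mathcal{N}$.

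Finally, if $G$ is a finitely generated $3$-step solvable group, then by the above $G$ is a finitely generated group belonging to the variety $\;\mathcal{U}\mathcal{N}\mathcal{N}$, so \thmref{unn} applies and yields $\pw(G)<\infty$. There is no real obstacle here: the entire content of the corollary is carried by \thmref{unn}, and the only thing to verify is the elementary varietal inclusion $\mathcal{U}\mathcal{U}\mathcal{U}\subseteq\mathcal{U}\mathcal{N}\mathcal{N}$ recorded above.
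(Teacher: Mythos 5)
Your argument is correct and is exactly the intended derivation: the paper presents this as an immediate corollary of Theorem \ref{unn}, relying on the same inclusion $\mathcal U\mathcal U\mathcal U\subseteq\mathcal U\mathcal N\mathcal N$ coming from $\mathcal U\subseteq\mathcal N$. Nothing further is needed.
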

We do not know whether the palindromic width of an arbitrary finitely generated solvable group
(of step $> 3$) is finite. Note, that an answer on the similar question on the commutator width also unknown.

\medskip In \cite{ BG2}, we have proved that the palindromic width of a finitely generated free
metabelian group of rank $n$ is at most $4n-1$. It would be interesting to obtain an exact value for the
palindromic width of a finitely generated metabelian group. It is indeed possible to obtain a precise
value of the palindromic widths for some metabelian groups. In this paper, we consider the special case of
the metabelian group $G=\Z \wr Z$.  If $a$ and $b$ are the generators of the first and the second
cyclic group in the above wreath product, then we prove  that $\pw(\Z \wr \Z, \{a, b\})=3$, see  \secref{wrs}. Along the way we also show that the commutator width of $\Z \wr \Z$ is $1$.

\medskip After finishing this work, we learned that Riley and Sale \cite{RS} have also studied palindromic
width of solvable and metableian groups using different techniques. They have proved the special case of \thmref{sol1} when $A$ is the trivial subgroup, that is, their result
states that the palindromic width of a finitely generated solvable group that satisfies the maximal condition
for normal subgroups has finite palindromic width. They have proved this using a result of
Akhavan-Malayeri \cite{AM1}. Riley and Sale have also proved that the palindromic width of
$\Z \wr \Z$ is $3$.  Their proof is based on their estimate of $\pw(G \wr Z^r)$, where $G$ is a finitely generated group. On the other hand, our proof relies on the fact (proved in \secref{wrs}) that
any element in the commutator subgroup of $\Z \wr \Z$ is a commutator. Palindromic width in wreathe products has also been investigated by Fink \cite{fink} who has also obtained an estimate of $\pw(G\wr \Z^r)$. Fink has also obtained the finiteness of palindromic widths of wreathe products of some more classes of groups.

\subsection*{Acknowledgements} We are thankful to Andrew Sale and Tim Riley
 for informing us about their work and for their comments on our work. Thanks are also due to Elisabeth Fink for her comments and interest on our work.

\section{Commutator Width and Palindromic Width} \label{cwpw}

 In this paper we shall use the following result by Rhemtulla \cite[Lemma 2]{R1}.
\begin{lemma}\label{reh1}
Let $G=\langle A, x_1, x_2, \ldots, x_n \rangle$ where $A$ is an abelian normal subgroup of $G$. Then the
subgroup $[A, G]$ is precisely the following set
$$[A, G]=\{[a_1, x_1][a_2, x_2]\ldots [a_n, x_n] ~ | ~ a_i \in A\}.$$
\end{lemma}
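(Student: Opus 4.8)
The plan is to prove the two inclusions, the easy one being ``$\supseteq$'' and the real content being that $[A,G]$ is contained in the displayed set. First I would record a basic observation: since $A$ is abelian and normal in $G$, the map $a \mapsto [a,x_i]$ is a homomorphism $A \to A$. Indeed, for $a,b \in A$ one has $[a,x_i][b,x_i] = a^{-1}a^{x_i}b^{-1}b^{x_i} = (ab)^{-1}(ab)^{x_i} = [ab,x_i]$, using that $a^{x_i},b^{-1}$ lie in the abelian group $A$ and that conjugation by $x_i$ is an automorphism. Hence $[A,x_i] := \{[a,x_i] : a \in A\}$ is a subgroup of $A$, and a similar manipulation shows $[A,x_i^{-1}] = [A,x_i]$. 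Setting $N := [A,x_1][A,x_2]\cdots[A,x_n]$, the fact that each factor is a subgroup of the abelian group $A$ makes $N$ a subgroup of $A$, equal as a set to $\{[a_1,x_1][a_2,x_2]\cdots[a_n,x_n] : a_i \in A\}$. Since each $[a_i,x_i]$ is a commutator with first entry in $A$, the inclusion $N \subseteq [A,G]$ is immediate.

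The heart of the argument is to show that $N$ is normal in $G$. As $G = \langle A,x_1,\ldots,x_n\rangle$, it suffices to check invariance of $N$ under conjugation by each of the generators, i.e. by elements of $A$ and by $x_j^{\pm 1}$. Conjugation by an element of $A$ fixes $N$ pointwise because $N \leq A$ and $A$ is abelian. For the $x_j^{\pm 1}$, the key point is the identity $m^{y} = m\,[m,y]$: if $m \in [A,x_i] \subseteq A$ then $m^{x_j^{\pm 1}} = m\,[m,x_j^{\pm 1}]$ with $[m,x_j^{\pm 1}] \in [A,x_j^{\pm 1}] = [A,x_j]$, so $m^{x_j^{\pm 1}} \in [A,x_i][A,x_j] \subseteq N$. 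Therefore $[A,x_i]^{x_j^{\pm 1}} \subseteq N$ for all $i,j$, hence $N^{x_j^{\pm 1}} = [A,x_1]^{x_j^{\pm 1}}\cdots[A,x_n]^{x_j^{\pm 1}} \subseteq N$; applying this to $x_j$ and to $x_j^{-1}$ gives $N^{x_j} = N$, and normality follows.

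Finally I would prove $[A,G] \subseteq N$. Since $[A,G]$ is generated by the elements $[a,g]$ with $a \in A$, $g \in G$, and $N$ is a subgroup, it suffices to put each such $[a,g]$ in $N$. Because $G/A$ is generated by the images of the $x_j$, write $g = wc$ with $c \in A$ and $w$ a word in the $x_j^{\pm 1}$; then $[a,wc] = [a,c]\,[a,w]^c = [a,w]$, since $[a,c] = 1$ and $[a,w] \in A$ commutes with $c$. So it is enough to show $[a,w] \in N$ for every word $w$ in the $x_j^{\pm 1}$, which I would do by induction on the length of $w$. For length $\leq 1$ this holds because $[a,x_j^{\pm 1}] \in [A,x_j] \subseteq N$. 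For the inductive step, write $w = u\,x_j^{\epsilon}$ and use $[a, u x_j^{\epsilon}] = [a,x_j^{\epsilon}]\,[a,u]^{x_j^{\epsilon}}$: the first factor lies in $N$, the factor $[a,u]$ lies in $N$ by the inductive hypothesis, and $[a,u]^{x_j^{\epsilon}} \in N$ by normality of $N$. Hence $[A,G] \subseteq N$, and together with the reverse inclusion this gives $[A,G] = N$, which is the assertion.

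The only genuine obstacle is the normality of $N$; the rest is bookkeeping with the commutator identities $[a,uv] = [a,v][a,u]^v$ and $m^y = m[m,y]$. I expect the two uses of the abelianness of $A$ — to turn $[A,x_i]$ into a subgroup, and to absorb conjugation by $A$ and by the tail $c$ — to be the spots that most need care, since the conclusion fails without that hypothesis.
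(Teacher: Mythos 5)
Your proof is correct and complete. Note that the paper itself gives no argument for this statement: it is quoted verbatim as Lemma~2 of Rhemtulla's paper \cite{R1}, so there is nothing internal to compare against; your write-up is essentially the standard (Rhemtulla-style) argument. The three pillars are all sound: $a\mapsto[a,x_i]$ is a homomorphism because $A$ is abelian and normal, so each $[A,x_i]$ is a subgroup of $A$ and $N=[A,x_1]\cdots[A,x_n]$ is a subgroup equal to the displayed set; normality of $N$ follows from $m^{x_j^{\pm1}}=m[m,x_j^{\pm1}]$ together with $[A,x_j^{-1}]=[A,x_j]$, checked on generators and inverted to get equality rather than mere containment; and the inclusion $[A,G]\subseteq N$ reduces, via $[a,wc]=[a,c][a,w]^c$ and induction on word length with the identity $[a,ux_j^{\epsilon}]=[a,x_j^{\epsilon}][a,u]^{x_j^{\epsilon}}$, to the length-one case. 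You correctly flag the two places where abelianness of $A$ is indispensable.
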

We shall also use the following theorem by Rhemtulla and Akhavan-Malayeri \cite{MR} to obtain a better bound for the metabelian groups.
\begin{theorem}\label{mr1}
Let $G=\langle x_1, \ldots, x_n \rangle$ is a non-abelian free abelian-by-nilpotent group of rank $n$. Let $A$ be an abelian normal subgroup of $G$ such that $G/A$ is nilpotent. Then every element $g \in G'$ can be expressed as:
$$g=[u_1, x_1]^{a_1} [u_2, x_2]^{a_2} \ldots [u_n, x_n]^{a_n}$$
for $u_1, \ldots, u_n \in G$ and $a_1, \ldots, a_n \in A$, where $u^b=b^{-1} u b$.
\end{theorem}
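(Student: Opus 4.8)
The plan is to derive the theorem from \lemref{reh1} by passing to the nilpotent quotient $G/[A,G]$ and then pulling the expression back. Since $A$ is normal and abelian, $K:=[A,G]$ is an abelian normal subgroup of $G$ with $K\le A$; in $\bar G:=G/K$ the image of $A$ is central, and $\bar G/(\text{im }A)\cong G/A$ is nilpotent, of class $c$ say, so $\bar G$ is nilpotent of class at most $c+1$. Thus the first reduction is to the nilpotent case, where one proves: if $N=\langle y_1,\dots ,y_n\rangle$ is nilpotent then every $h\in N'$ equals $[v_1,y_1][v_2,y_2]\cdots [v_n,y_n]$ for suitable $v_i\in N$.

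This nilpotent claim I would prove by induction on the nilpotency class $k$ of $N$. The case $k\le 1$ is trivial, and for $k=2$ one uses that commutators are bi-additive in a class-$2$ group: there $N'=\langle [y_i,y_j]\mid i<j\rangle$, so $h=\prod_{i<j}[y_i,y_j]^{m_{ij}}$, and taking $v_j=\prod_{i<j}y_i^{m_{ij}}$ yields $[v_j,y_j]=\prod_{i<j}[y_i,y_j]^{m_{ij}}$, whence $h=[v_1,y_1]\cdots [v_n,y_n]$ since these commutators are central. For $k\ge 3$, put $Z=\gamma_k(N)\le Z(N)$; the induction hypothesis in $N/Z$ gives $h=[v_1,y_1]\cdots [v_n,y_n]\,z$ with $z\in\gamma_k(N)=[\gamma_{k-1}(N),N]$. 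As $k\ge 3$, $\gamma_{k-1}(N)$ is abelian (because $[\gamma_{k-1},\gamma_{k-1}]\le\gamma_{2k-2}\le\gamma_{k+1}=1$), so \lemref{reh1}, applied with the abelian normal subgroup $\gamma_{k-1}(N)$, gives $z=[a_1,y_1]\cdots [a_n,y_n]$ with $a_i\in\gamma_{k-1}(N)$. Since $[[v_i,y_i],a_i]\in\gamma_{k+1}(N)=1$ we have $[v_ia_i,y_i]=[v_i,y_i]^{a_i}[a_i,y_i]=[v_i,y_i][a_i,y_i]$, and the factors $[a_i,y_i]\in\gamma_k(N)$ are central, so rearranging gives $h=[v_1a_1,y_1]\cdots [v_na_n,y_n]$.

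Returning to $G$: applying the nilpotent claim in $\bar G$ and lifting, for $g\in G'$ one obtains $g=[w_1,x_1]\cdots [w_n,x_n]\,t$ with $w_i\in G$ and $t\in K=[A,G]$; then \lemref{reh1}, applied to $G=\langle A,x_1,\dots ,x_n\rangle$ with the given $A$, expresses $t=[a_1,x_1]\cdots [a_n,x_n]$ with $a_i\in A$, so
$$g=[w_1,x_1]\cdots [w_n,x_n]\,[a_1,x_1]\cdots [a_n,x_n].$$
The remaining step is to fuse the second block into the first in the normal form $[u_1,x_1]^{a_1}\cdots [u_n,x_n]^{a_n}$: all the factors $[a_i,x_i]$ lie in the abelian normal subgroup $[A,G]$, hence commute with one another and, when pushed past a factor $[w_j,x_j]$, are only replaced by conjugates that still lie in $[A,G]$; one slides $[a_i,x_i]$ next to $[w_i,x_i]$ and, using $[w_i,x_i]^{a_i}=[w_i,x_i]\,[[w_i,x_i],a_i]$ with $[[w_i,x_i],a_i]\in[G',A]\subseteq [A,G]$, merges them into a single factor $[u_i,x_i]^{b_i}$ (with $u_i\in G$, $b_i\in A$), at the cost of further error terms inside $[A,G]$.

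I expect this last fusion to be the main obstacle. One must organise the collection so that every accumulated commutator error term — all of which live in $[A,G]$, where $A$ is abelian and which carries the structure of a module over $\Z[G/A]$ — is ultimately absorbed into the conjugation slots $(\;)^{a_i}$. In the metabelian case $[G',A]=[A,A]=1$, so every error vanishes and no conjugation is needed; in general one must exploit the free structure of $G$, i.e. the concrete description of $A$ as a $\Z[G/A]$-module (for instance via the Magnus embedding of $G$), to push this bookkeeping through. Everything else reduces to routine uses of \lemref{reh1} and of the lower central series.
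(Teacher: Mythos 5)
First, a point of comparison: the paper does not prove this statement at all --- it is imported verbatim from Akhavan-Malayeri and Rhemtulla \cite{MR} --- so there is no in-paper argument to measure yours against, and your proposal has to stand on its own. Judged that way, the first two thirds are sound: $G/[A,G]$ is indeed nilpotent; your induction on the nilpotency class (applying \lemref{reh1} to the abelian normal subgroup $\gamma_{k-1}(N)$ and using the centrality of $\gamma_k(N)$, with the correct observation that $[\gamma_2,\gamma_{k-1}]\le\gamma_{k+1}=1$) correctly shows that every element of the derived subgroup of an $n$-generated nilpotent group is a product $[v_1,y_1]\cdots[v_n,y_n]$; and a second application of \lemref{reh1} legitimately yields
$$g=[w_1,x_1]\cdots[w_n,x_n]\,[a_1,x_1]\cdots[a_n,x_n],\qquad a_i\in A.$$

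The problem is that the passage from this expression ($2n$ commutators) to the asserted normal form $[u_1,x_1]^{a_1}\cdots[u_n,x_n]^{a_n}$ ($n$ conjugated commutators) is precisely the content of the theorem, and you do not prove it: you correctly flag the ``fusion'' as the main obstacle and then only gesture at the Magnus embedding. The gap is genuine, not cosmetic. The one exact identity available, $[u_ia_i,x_i]=[u_i,x_i]^{a_i}[a_i,x_i]$, lets a conjugated commutator \emph{absorb} a following $A$-commutator into a single plain commutator --- the wrong direction for your purposes. Going the other way, $[u_i,x_i]^{a_i}=[u_i,x_i]\,[[u_i,x_i],a_i]$, so to hide the tail $t=[a_1,x_1]\cdots[a_n,x_n]$ inside the conjugation slots you would need (up to conjugation bookkeeping in the abelian normal subgroup $[A,G]$) the map $(a_1,\ldots,a_n)\mapsto\prod_i[[u_i,x_i],a_i]$, whose image is built from $[G',A]$-commutators, to hit $t^{-1}$; \lemref{reh1} asserts surjectivity of $(a_i)\mapsto\prod_i[a_i,x_i]$, which is a different map, and nothing you have established controls the first one. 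In the metabelian case $[G',A]=1$ and the whole difficulty evaporates only because the tail cannot be absorbed and must instead already be part of the main product --- which your decomposition does not arrange. This is exactly where \cite{MR} have to work, and where the freeness hypothesis (which your argument never uses) enters. As written, your proposal establishes the weaker statement that $g$ is a product of $2n$ commutators of the indicated shapes, not the theorem.
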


Also, we note the following lemma that is often useful to estimate palindromic width.
\begin{lemma}\label{onto} \cite{BG}
Let $G = \langle X \rangle$ and $H = \langle Y \rangle$ be two groups,
$\mathcal{P}(X)$ is the set of palindromes in the alphabet $X^{\pm 1},$
$\mathcal{P}(Y)$ is the set of palindromes in the alphabet $Y^{\pm 1}.$ If
$\varphi : G \longrightarrow H$ be an epimorphism such that
$\varphi(X) = Y$,  then
$$
{\rm pw}(H) \leq {\rm pw}(G).
$$
\end{lemma}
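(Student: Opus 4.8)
The plan is to transport palindromic factorisations forward along $\varphi$. Fix an arbitrary element $h \in H$. Since $\varphi$ is surjective, I would first choose a preimage $g \in G$ with $\varphi(g) = h$, and then write $g = p_1 p_2 \cdots p_k$ as a product of $k = l_{\mathcal P}(g)$ palindromes in the alphabet $X^{\pm 1}$; by definition $k \le \pw(G, X)$. Applying $\varphi$ gives $h = \varphi(p_1)\varphi(p_2)\cdots\varphi(p_k)$, so it is enough to show that each $\varphi(p_i)$ is representable in $H$ by a palindrome in the alphabet $Y^{\pm 1}$. Granting this, $l_{\mathcal P}(h) \le k \le \pw(G,X)$, and taking the supremum over all $h \in H$ yields $\pw(H) \le \pw(G)$.

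For the remaining point, express the palindrome $p_i$ as a word $x_{j_1}^{\varepsilon_1}\cdots x_{j_m}^{\varepsilon_m}$ in $X^{\pm 1}$ which reads the same forwards and backwards. Because $\varphi(X) = Y$, every $\varphi(x_{j_t})$ lies in $Y$, so $w := \varphi(x_{j_1})^{\varepsilon_1}\cdots\varphi(x_{j_m})^{\varepsilon_m}$ is a word in the alphabet $Y^{\pm 1}$ representing $\varphi(p_i)$, and, read letter by letter, it is again the same forwards and backwards. The one subtlety is that a palindrome is by definition a \emph{reduced} word, whereas $w$ need not be reduced; so I would insert the small observation that free reduction commutes with reversal of words. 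Concretely, a cancellation of an adjacent pair $y\,y^{-1}$ inside $w$ corresponds, after reversing $w$, to the cancellation of the mirror pair $y^{-1} y$; hence by confluence of free reduction the reduced form of the reverse of $w$ equals the reverse of the reduced form of $w$. Since $w$ coincides with its own reverse as a word, its reduced form $\bar w$ coincides with its own reverse, i.e. $\bar w$ is a genuine reduced palindrome, and $\bar w$ still represents $\varphi(p_i)$ in $H$.

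The argument as a whole is short, and I expect the only real obstacle — a mild one — to be exactly the bookkeeping around reducedness just described: one must not conflate ``palindromic word'' with ``palindrome'', and the reduction-commutes-with-reversal step is what closes that gap. Everything else is a direct unwinding of the definitions of palindromic length and palindromic width.
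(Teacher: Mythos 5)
Your argument is correct and is the standard one: push a minimal palindromic factorisation of a preimage forward through $\varphi$, noting that a letter-by-letter image of a palindrome is again a palindromic word and that free reduction commutes with word reversal. The paper itself only cites \cite{BG} for this lemma rather than reproving it, and the proof there is essentially the same; your care about the distinction between ``palindromic word'' and ``reduced palindrome'' is the right (and only) point needing attention.
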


In \cite{BG} we also prove the following lemma.

\begin{lemma} \label{l1}
Let $G=\langle X \rangle$ be a group generated by a set $X$. Then the following
hold.
\begin{enumerate}
\item{ If $p$ is a palindrome, then for any integer $m$ the element $p^m$ is also a palindrome.}

\item{ Any element in $G$ which is conjugate to a product of $n$ palindromes, $n
\geq 1$,  is a product of $n$ palindromes if $n$ is even, and of $n+1$ palindromes
if $n$ is odd. }

\item{Any commutator of the type  $[u, p],$  where $p$ is a palindrome is
a product of $3$ palindromes.
Any element $[u, x^{\alpha}] x^{\beta}$, $x \in X$, $\alpha, \beta \in
\mathbb{Z},$ is a product of $3$ palindromes.}

\item{Any commutator of the type  $[u, p q],$  where $p, q$ are
palindromes is a product of $4$ palindromes.
Any element $[u, p x^{\alpha}] x^{\beta}$, $x \in X$, $\alpha, \beta \in
\mathbb{Z},$ is a product of $4$ palindromes.}
\end{enumerate}
\end{lemma}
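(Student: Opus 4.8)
The plan is to treat the four assertions in order, deriving (3) and (4) from (1) and (2). For a word $w$ in the alphabet $X^{\pm 1}$ I write $\bar w$ for the word read from right to left, and I use freely that $\overline{uv}=\bar v\,\bar u$ and $\overline{w^{-1}}=\bar w^{-1}$, together with the one point that genuinely needs care: the free reduction of a word which reads the same in both directions again reads the same in both directions. The latter follows from confluence of free reduction and the fact that reversal commutes with it; concretely, it lets me certify that an element of $G$ equals a palindrome simply by exhibiting some (possibly unreduced) representing word in $X^{\pm1}$ that is invariant under reversal.

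For (1), write $p=a_1a_2\cdots a_k$ with $a_i=a_{k+1-i}$; then $a_k=a_1$, so the first and last letters of $p$ are not mutually inverse, whence $p^m$ is already reduced for $m\ge 1$, and it is clearly invariant under reversal. The cases $m\le 0$ follow since the empty word is a palindrome and $\overline{p^{-1}}=\bar p^{-1}=p^{-1}$. For (2), I would base everything on the two identities
\[
h\,q\,h^{-1}=(h\,q\,\bar h)\,(h\bar h)^{-1},\qquad h\,q_1q_2\,h^{-1}=(h\,q_1\,\bar h)\,(\bar h^{-1}q_2\,h^{-1}),
\]
observing that $h\bar h$, $h\,q\,\bar h$ and $\bar h^{-1}q_2\,h^{-1}$ are invariant under reversal, hence palindromes, whenever $q,q_1,q_2$ are palindromes, while $(h\bar h)^{-1}$ is a palindrome by (1). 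Given $g=h(p_1\cdots p_n)h^{-1}$ with all $p_i$ palindromes: if $n$ is even, telescoping the second identity gives
\[
g=(h\,p_1\,\bar h)(\bar h^{-1}p_2\,h^{-1})(h\,p_3\,\bar h)(\bar h^{-1}p_4\,h^{-1})\cdots(h\,p_{n-1}\,\bar h)(\bar h^{-1}p_n\,h^{-1}),
\]
a product of $n$ palindromes; if $n$ is odd, apply the same to $p_1\cdots p_n$ followed by the empty palindrome, producing $n+1$ palindromes.

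For (3) and (4) I would use the single decomposition $[u,w]=(u^{-1}w^{-1}u)\,w$, in which $u^{-1}w^{-1}u$ is a conjugate of $w^{-1}$. If $w=p$ is a palindrome then so is $w^{-1}$, so $u^{-1}p^{-1}u$ is a product of $2$ palindromes by (2) and $[u,p]$ is a product of $3$; taking $w=x^{\alpha}$ and collapsing $x^{\alpha}x^{\beta}=x^{\alpha+\beta}$ yields $[u,x^{\alpha}]x^{\beta}=(u^{-1}x^{-\alpha}u)\,x^{\alpha+\beta}$, again $3$ palindromes. If $w=pq$ then $w^{-1}=q^{-1}p^{-1}$ is a product of $2$ palindromes, so its conjugate $u^{-1}w^{-1}u$ is a product of $2$ palindromes by the even case of (2), and together with the factor $pq$ this exhibits $[u,pq]$ as a product of $4$; the element $[u,px^{\alpha}]x^{\beta}=(u^{-1}x^{-\alpha}p^{-1}u)(p\,x^{\alpha+\beta})$ is handled in exactly the same way.

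The only real obstacle is the reduction bookkeeping flagged in the first paragraph: one must make sure that the words assembled from $h$, $\bar h$ and the $p_i$ genuinely reduce to palindromes rather than merely to conjugates of palindromes. Granting that, everything else is the formal manipulation of the two identities above.
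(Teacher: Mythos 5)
Your proof is correct, and it follows essentially the same route as the source the paper cites for this lemma (\cite{BG}, building on \cite{BST}): the reversal operator $w\mapsto\bar w$, the telescoping identities $hq_1q_2h^{-1}=(hq_1\bar h)(\bar h^{-1}q_2h^{-1})$ and $hqh^{-1}=(hq\bar h)(h\bar h)^{-1}$ for part (2), and the decomposition $[u,w]=(u^{-1}w^{-1}u)w$ for parts (3) and (4). Your explicit attention to the fact that free reduction commutes with reversal is a point the standard treatment also relies on, so nothing is missing.
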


The following is a generalization of (1) and (4) in the above lemma.

\begin{lemma}\label{g4}
Let $G=\langle X \rangle$ and $p_1, \ldots, p_k$ are palindromes in $G$ and $u \in G$. Then
\begin{enumerate}
\item{If $q = p_1 p_2$ is a product of two palindromes, then for any integer $m$ the element $q^m$ is also a
product of two palindromes.}

\item{
$$l_{\mathcal{P}}([u, p_1p_2\ldots p_k])\leq 2k+\varepsilon \hbox{ where } \varepsilon =
\left\{\begin{array}{ll}
0 & ~\mbox{if}~k\hbox{ is even}, \\
1 & ~\mbox{if}~k\hbox{ is odd}. \\
\end{array}
\right.
$$}
\end{enumerate}
\end{lemma}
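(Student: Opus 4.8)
The plan is to deduce both parts from \lemref{l1} together with a conjugation trick. For part (1), write $q = p_1 p_2$ with $p_1, p_2$ palindromes. The key observation is that $q^m = (p_1 p_2)^m = p_1 (p_2 p_1)^{m-1} p_2$, and $p_2 p_1$ is the reverse of $p_1 p_2$; more usefully, $p_1 q^m p_1^{-1} = (p_2 p_1)^m$ when we pull one factor through. A cleaner route: note that $q^m$ is conjugate (by $p_1$) to $(p_2 p_1)^m = p_2 (p_1 p_2)^{m-1} p_1 = p_2 q^{m-1} p_1$, but this recursion does not obviously terminate. Instead I would argue directly: $q^m$ is conjugate to $p_1^{-1} q^m p_1 = (p_2 p_1)^m$. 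Now $(p_2 p_1)^m$ — hmm, this is again a product of two palindromes only in the same sense. The honest approach is: $q^m$ is a product of $2$ palindromes by \lemref{l1}(1) applied after symmetrizing. Actually the slick proof is: $q^m = p_1 \cdot (p_2 p_1)^{m-1} p_2$, and $(p_2 p_1)^{m-1}$ is a palindrome if and only if... it is not in general. So let me restructure: set $r = p_2 p_1 p_2$ — no. The genuinely correct statement uses that $q^m$ is \emph{conjugate} to a product of two palindromes and then invokes \lemref{l1}(2) with $n = 2$ (even), giving that $q^m$ itself is a product of $2$ palindromes. Concretely, $q^m = p_1 (p_2 p_1)^{m-1} p_2$; conjugating by $p_1^{-1}$ gives $(p_2 p_1)^m$; but $(p_2 p_1)^m = p_2 \bigl( (p_1 p_2)^{m-1} p_1 \bigr)$, and by induction on $m$ combined with \lemref{l1}(2) this is a product of two palindromes, hence so is $q^m$. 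I expect this bookkeeping to be the first place where care is needed, and the right formulation is probably to prove it for $m \geq 0$ by induction, the base cases $m=0,1$ being trivial.

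For part (2), I would induct on $k$, or better, reduce directly using \lemref{l1}. Write $w = p_1 p_2 \cdots p_k$. The identity $[u, w] = u^{-1} w^{-1} u w = u^{-1} (w^{-1} u w)$ shows $[u,w] = u^{-1} \cdot u^{w}$. More to the point, I would use the standard commutator expansion together with part (3) and (4) of \lemref{l1}: those handle the cases of a commutator with a product of one or two palindromes, giving bounds $3$ and $4$ respectively, matching $2k + \varepsilon$ for $k = 1, 2$. For the inductive step, group the palindromes: if $k$ is even, pair them as $q_1 = p_1 p_2$, $q_2 = p_3 p_4$, $\ldots$, $q_{k/2}$, each $q_i$ being a product of two palindromes, and by part (1) each $q_i^{\pm 1}$ (and any power) is again a product of two palindromes. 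Then expand $[u, w]$ using the commutator identity $[u, ab] = [u,b]\,[u,a]^{b}$ repeatedly; each factor becomes a commutator of the form $[u', q_i]$ up to conjugation, and by \lemref{l1}(4) (since $q_i$ is a product of two palindromes) plus \lemref{l1}(2) each such factor costs at most $4$ palindromes, but conjugation and the even count let the total collapse. I need to track the parity carefully so the bound comes out to exactly $2k$ and not $4 \cdot (k/2) = 2k$ — which in fact is the same, good.

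The main obstacle I anticipate is making the constant sharp: the naive expansion of $[u, p_1 \cdots p_k]$ into $k$ commutators each costing $3$ would give $3k$ (with parity corrections), which is worse than $2k + \varepsilon$. Getting down to $2k$ requires the pairing idea from part (1) — handling two palindromes at a time at a cost of (roughly) $4$ rather than $6$ — and then exploiting \lemref{l1}(2) to remove conjugations for free whenever the running count is even. So the order of operations is: (i) prove part (1) by induction on $m$ using \lemref{l1}(1),(2); (ii) establish the base cases $k = 1, 2$ of part (2) from \lemref{l1}(3),(4); (iii) for general $k$, split $w$ into $\lceil k/2 \rceil$ blocks, at most one of which is a single palindrome, apply the commutator product formula $[u, w_1 w_2] = [u, w_2] [u, w_1]^{w_2}$ iteratively, bound each resulting (conjugated) commutator using part (1) and \lemref{l1}(4), and finally clean up the conjugations and parity via \lemref{l1}(2) to land on $2k + \varepsilon$.
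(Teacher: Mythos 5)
Your treatment of part (1) has a genuine gap. The argument you finally settle on --- conjugate $q^m$ by $p_1$ to get $(p_2p_1)^m$, show that this is a product of two palindromes ``by induction on $m$ combined with \lemref{l1}(2)'', and transfer back --- does not close: the inductive hypothesis that $q^{m-1}$ (or $(p_2p_1)^{m-1}$) is a product of \emph{some} two palindromes $r_1r_2$ gives no control over $q^{m-1}\cdot p_1p_2 = r_1r_2p_1p_2$, which a priori is only a product of four palindromes, and \lemref{l1}(2) with $n=2$ merely moves a conjugation across without reducing the count. The missing observation, which is the entire content of the paper's one-line proof, is that for $m\ge 1$ the word $q^{m-1}p_1=p_1p_2p_1\cdots p_2p_1$ is itself a \emph{single} palindrome: the sequence of blocks is symmetric under reversal and each block is a palindrome, exactly as in \lemref{l1}(1). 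Hence $q^m=\left(q^{m-1}p_1\right)\cdot p_2$ is literally a product of two palindromes, and negative $m$ follows since the inverse of a palindrome is a palindrome. You write down the relevant identity $q^m=p_1(p_2p_1)^{m-1}p_2$ several times without recognizing that the first two of the three displayed pieces combine into one palindrome.

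For part (2), your plan (pair the $p_i$ into $\lceil k/2\rceil$ blocks, expand $[u,w_1w_2]=[u,w_2][u,w_1]^{w_2}$ iteratively, charge $4$ per block via \lemref{l1}(4) and \lemref{l1}(2), and arrange the unpaired single palindrome, if any, to occupy the unconjugated slot so that it costs $3$ rather than $4$) does arrive at $2k+\varepsilon$ and is correct in outline, but it is far heavier than needed and the placement of the odd block is a detail you flag but do not verify. The paper instead writes $[u,p_1\cdots p_k]=\bigl(u^{-1}(p_k^{-1}\cdots p_1^{-1})u\bigr)\cdot p_1\cdots p_k$: the first factor is a conjugate of a product of $k$ palindromes, hence by \lemref{l1}(2) a product of $k+\varepsilon$ palindromes, and the second factor visibly contributes $k$ more; no commutator calculus, pairing, or parity bookkeeping beyond the single application of \lemref{l1}(2) is required.
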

\begin{proof}
(1) Let $m \geq 1$. Then $q^m = q^{m-1} p_1 \cdot p_2$ is a product of two palindromes. If $m \leq -1$, then
the assertion follows from the fact that the inverse to a palindrome is a palindrome.

(2) We see that
$$w=[u, p_1 p_2 \ldots p_k]=u^{-1}(p^{-1}_k \ldots p_2^{-1} p_1^{-1}) u~ p_1 p_2 \ldots p_k.$$
By \lemref{l1}(2) the element $u^{-1}(p^{-1}_k \ldots p_2^{-1} p_1^{-1}) u$ is a product of $k$ palindromes if $k$ is even and is a product of $k+1$ palindromes if $k$ is odd. Hence $w$ is a product of $2k + \varepsilon$ palindromes.
\end{proof}

Let $G=\langle X \rangle$ be a  group. Let $\bar G=\langle \bar X \rangle$, where $\bar X$
is the image of $X$ under some epimorphism of $G$. It follows from \lemref{onto} that
${\rm pw}(\bar G, \bar X) \leq {\rm pw}(G, X)$.  We now aim to find some upper bound for $\pw(G, X)$.

Let $H$ be a normal subgroup of $G$ such that $G/H=\bar G$. Then $H$ is generated by some elements
$h_1, h_2, \ldots$. The number of these elements may be infinite. However, any element $h \in H$ can be
represented by some word in the alphabet $X^{\pm 1}$. Denote by $\pw(H, X)$ the palindromic width of the
subgroup $H$ in the alphabet $X^{\pm 1}$.
\begin{lemma}\label{r1}
Let $G=\langle X \rangle$ be a group generated by $X$. Let $H$ be a normal subgroup of $G$ and $\bar G = G/H =
\langle \bar X \rangle$. Then
$$\pw(\bar G, \bar X) \leq \pw(G, X) \leq \pw(\bar G, \bar X) + \pw(H, X). $$
\end{lemma}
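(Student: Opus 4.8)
The plan is to prove the two inequalities of \lemref{r1} separately, the left one being essentially immediate and the right one requiring a genuine (but short) argument.

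\medskip

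For the left inequality $\pw(\bar G, \bar X) \leq \pw(G, X)$, I would simply invoke \lemref{onto}: the quotient map $\varphi : G \to \bar G$ is an epimorphism sending $X$ onto $\bar X$, and a palindrome in $X^{\pm 1}$ maps to a palindrome in $\bar X^{\pm 1}$, so any factorization of $g \in G$ into $k$ palindromes pushes forward to a factorization of $\varphi(g)$ into $k$ palindromes. Hence $l_{\mathcal P}(\varphi(g)) \leq l_{\mathcal P}(g)$ for all $g$, and taking suprema gives the claim. (This part is already remarked in the paragraph preceding the lemma, so I might just cite it.)

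\medskip

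For the right inequality, let $g \in G$ be arbitrary and set $\bar g = gH \in \bar G$. Choose a factorization $\bar g = \bar q_1 \cdots \bar q_m$ into palindromes $\bar q_i$ in $\bar X^{\pm 1}$ with $m \leq \pw(\bar G, \bar X)$. Each $\bar q_i$ is the image of the corresponding palindrome $q_i$ in $X^{\pm 1}$ (take the same word in the letters of $X$). Then $q_1 \cdots q_m$ is a product of $m$ palindromes in $X^{\pm 1}$ mapping to $\bar g$, so $h := (q_1 \cdots q_m)^{-1} g$ lies in $H$. By definition of $\pw(H, X)$ — the palindromic width of $H$ measured using palindromes in the alphabet $X^{\pm 1}$ — we can write $h = r_1 \cdots r_\ell$ with each $r_j$ a palindrome in $X^{\pm 1}$ and $\ell \leq \pw(H, X)$. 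Therefore $g = q_1 \cdots q_m r_1 \cdots r_\ell$ is a product of at most $\pw(\bar G, \bar X) + \pw(H, X)$ palindromes in $X^{\pm 1}$, and taking the supremum over $g \in G$ yields the bound.

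\medskip

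The only subtlety — and the place where one must be slightly careful rather than the place of real difficulty — is the precise meaning of $\pw(H, X)$: here $H$ is not generated by $X$, but every element of $H$ is represented by some word in $X^{\pm 1}$, and $\pw(H, X)$ counts the minimal number of palindromic words over $X^{\pm 1}$ needed to represent a given element of $H$, as set up in the paragraph before the statement. With that convention the argument above is exactly the standard "lift a quotient factorization, then correct by an element of the kernel" splitting, and no obstacle remains.
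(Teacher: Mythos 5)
Your proof is correct and follows essentially the same route as the paper: the left inequality via Lemma~\ref{onto}, and the right one by lifting a palindromic factorization from the quotient and correcting by an element of $H$, which is then expressed via $\pw(H,X)$. In fact your write-up is slightly more careful than the paper's (which transposes the roles of $k$ and $l$ in its final sentence), but the underlying decomposition $g = (q_1\cdots q_m)h$ with $h \in H$ is identical.
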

\begin{proof}
It follows from \lemref{onto} that $\pw(\bar G, \bar X) \leq \pw(G, X)$. To prove that $ \pw(G, X)
\leq \pw(\bar G, \bar X) + \pw(H, X)$, suppose that $\pw(\bar G, \bar X)=k$ and $\pw(H, X)=l$. Any $g \in G$
is a product $g=h \bar g$, where $h \in H$ and $\bar g$ is the coset representative of $G$ by $H$, i.e.
$g \in H \bar g$. Since $h$ is a product of at most $k$ palindromes and $\bar g$ is a product of at most $l$
palindromes, hence $g$ is a product of at most $k+l$ palindromes. This proves the lemma.
\end{proof}
\begin{cor}
Let $H=\langle Y \rangle$ be a group such that $\pw(H, Y) < \infty$. Let $G=\langle X \cup Y\rangle$ be a finite extension of $H$. Then $\pw(G, X \cup Y) < \infty$.
\end{cor}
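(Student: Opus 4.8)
The plan is to deduce this corollary directly from \lemref{r1} together with the given finiteness of $\pw(H,Y)$, so the main task is to produce a normal subgroup of $G$ with the right quotient and control its palindromic width in the alphabet $X\cup Y$. Since $H=\langle Y\rangle$ and $G=\langle X\cup Y\rangle$ is a finite extension of $H$, I would first argue that $H$ contains a subgroup $N$ that is normal in $G$ and of finite index in $G$ (for instance, replace $H$ by the core $N=\bigcap_{g\in G} g^{-1}Hg$, which is a finite intersection because $[G:H]<\infty$, hence normal in $G$ and of finite index). The quotient $\bar G = G/N$ is then a finite group, and by \lemref{r1} we have
$$\pw(G, X\cup Y) \leq \pw(\bar G, \overline{X\cup Y}) + \pw(N, X\cup Y).$$
The first summand is finite simply because $\bar G$ is a finite group, so only finitely many palindromic lengths occur; thus it remains to bound $\pw(N, X\cup Y)$.

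For the second summand, the point is that $N$ sits inside $H=\langle Y\rangle$, and every element of $N$ — being an element of $H$ — is a word in $Y^{\pm1}$ of palindromic length at most $\pw(H,Y)=:l$ \emph{with respect to $Y$}. A palindrome in the alphabet $Y^{\pm1}$ is in particular a palindrome in the larger alphabet $(X\cup Y)^{\pm1}$, since reading a $Y$-word the same forwards and backwards is unaffected by enlarging the alphabet (no letters of $X$ appear). Hence $\pw(N, X\cup Y)\leq \pw(H,Y)=l<\infty$. Combining, $\pw(G, X\cup Y)\leq \pw(\bar G,\overline{X\cup Y}) + l<\infty$, which is the claim.

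I expect the only genuinely delicate point to be the bookkeeping around which generating set a given palindrome is ``a palindrome with respect to''; one must be slightly careful that \lemref{r1} is being applied with $H$ replaced by the \emph{normal} subgroup $N$, and that the palindromic width of $N$ is estimated in the ambient alphabet $X\cup Y$ rather than in a generating set of $N$ itself (which could a priori be worse). The observation that a $Y$-palindrome remains a palindrome over $X\cup Y$, together with $N\le H$, resolves this cleanly. Everything else — the existence of the finite-index normal core, and the finiteness of $\pw$ of a finite group — is routine. If one prefers to avoid passing to the core, an alternative is to note that $G$ acts on the finite coset space $G/H$ and take $N$ to be the kernel of this action, which is again normal of finite index and contained in $H$; the rest of the argument is identical.
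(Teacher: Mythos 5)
Your proof is correct and follows essentially the same route as the paper, which deduces the corollary directly from Lemma~\ref{r1} together with the observation that a palindrome over $Y^{\pm1}$ remains one over $(X\cup Y)^{\pm1}$ and that a finite quotient has finite palindromic width. Your extra step of passing to the normal core is harmless but redundant under the standard reading of ``finite extension,'' in which $H$ is already normal in $G$.
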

 Note that \lemref{r1}  is a palindromic version of \cite[Lemma 1]{R1} where Rhemtulla proved similar
 inequalities for commutator widths. However, we do not know whether analogue of the above corollary holds for
 commutator widths. If $G=\langle X \rangle$ is a finitely generated group and $G$ belongs to some variety
 $\mathcal M$ of groups, then to prove that $\pw(G, X)$ finite, it follows from \lemref{onto} that it is enough
 to prove $\pw(F_n(\mathcal M))$ is finite.

In \cite[Problem 2]{BG} we asked a question on the connection of the palindromic width and the commutator width
of a group. The following proposition gives us this connection for some groups.
\begin{prop}\label{cp}
Let $G=\langle X \rangle$ be a finitely generated group with a set of generators $X = \{x_1, \ldots, x_n \}$ of cardinality $n$.
Suppose $G$ has a non-trivial center $Z=Z(G)$. Let $\bar G=G/Z=\langle \bar X \rangle$. Let ${\rm cw}(G)=l$
and $pw(\bar G, \bar X)=k$. Then
$$\pw(G, X) \leq n + l(2k+\varepsilon), \hbox{ where }$$ $$\varepsilon= \left\{\begin{array}{ll}
0 & ~\mbox{if}~k\hbox{ is even}, \\
1 & ~\mbox{if}~k\hbox{ is odd}. \\
\end{array}
\right.
$$
\end{prop}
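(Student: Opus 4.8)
The plan is to use the central extension $1 \to Z \to G \to \bar G \to 1$ together with the finiteness of the commutator width of $G$ to reduce the palindromic length of an arbitrary element to a bounded count. First I would take an arbitrary $g \in G$ and write it, using the generators, as $g = x_1^{\alpha_1} x_2^{\alpha_2} \cdots x_n^{\alpha_n} \cdot c$ where $c$ lies in the commutator subgroup $G'$; this is possible because $G/G'$ is abelian and generated by the images of $x_1,\dots,x_n$. The monomial prefix $x_1^{\alpha_1}\cdots x_n^{\alpha_n}$ is a product of $n$ palindromes, since each $x_i^{\alpha_i}$ is itself a palindrome by \lemref{l1}(1). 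So the whole task is to bound $l_{\mathcal P}(c)$ for $c \in G'$, and the target bound $l(2k+\varepsilon)$ will come from expressing $c$ as a product of $l = {\rm cw}(G)$ commutators and handling each commutator separately.

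Next I would fix $c = [u_1,v_1][u_2,v_2]\cdots[u_l,v_l]$, a product of at most $l$ commutators. The key observation is that in $\bar G = G/Z$ each $v_j$ maps to an element that is a product of at most $k = \pw(\bar G,\bar X)$ palindromes in the alphabet $\bar X^{\pm 1}$; lifting such a palindromic expression back to $G$, we get $v_j = p_{j,1} p_{j,2} \cdots p_{j,k} \cdot z_j$ for some palindromes $p_{j,i}$ in $X^{\pm1}$ (the lift of a palindrome word is a palindrome) and some central element $z_j \in Z$. Since $z_j$ is central, $[u_j, v_j] = [u_j, p_{j,1}\cdots p_{j,k} z_j] = [u_j, p_{j,1}\cdots p_{j,k}]$: the central factor simply drops out of the commutator. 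Now \lemref{g4}(2) applies directly: $[u_j, p_{j,1}\cdots p_{j,k}]$ is a product of at most $2k + \varepsilon$ palindromes, with $\varepsilon$ depending on the parity of $k$ exactly as in the statement. Multiplying the $l$ commutators together, $c$ is a product of at most $l(2k+\varepsilon)$ palindromes, and combined with the $n$ palindromes from the monomial prefix we obtain $\pw(G,X) \le n + l(2k+\varepsilon)$.

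The one point that needs a little care — and what I expect to be the main (though modest) obstacle — is the bookkeeping with the center: one must check that pulling back a palindromic decomposition from $\bar G$ to $G$ really does introduce only a single central correction term per element, and that this term can be absorbed as claimed. Concretely, if $\bar v = \bar q_1 \cdots \bar q_k$ in $\bar G$ with each $\bar q_i$ a palindrome, then choosing any word representatives makes $v \, (q_1 \cdots q_k)^{-1}$ an element of $Z$; writing $q_1\cdots q_k = v z^{-1}$ with $z \in Z$ and noting $z$ commutes with everything gives $[u,v] = [u, q_1\cdots q_k]$, since $[u, wz] = [u,w]$ whenever $z$ is central. After this, the rest is a direct application of \lemref{g4}(2) and \lemref{l1}(1), with no further subtlety. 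Note also that the hypothesis that $Z$ is non-trivial is not actually needed for the inequality — if $Z = Z(G)$ is trivial then $\bar G = G$ and the bound is vacuously consistent — but it is the interesting case, since then $k$ is a bound for a proper quotient.
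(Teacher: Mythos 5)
Your proposal is correct and follows essentially the same route as the paper's own proof: decompose $g$ into a monomial prefix in the generators (worth $n$ palindromes) times an element of $G'$, write the latter as $l$ commutators, absorb the central correction of each second entry into the commutator, and apply \lemref{g4}(2). Your explicit check that the lift of a palindromic decomposition from $\bar G$ introduces only a central factor that drops out of $[u,v]$ is exactly the point the paper handles via the section $s$, so there is nothing to add.
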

\begin{proof}
We have the following exact sequence
$$1 \rightarrow Z \rightarrow G \xrightarrow{\pi} \bar G \rightarrow 1.$$
Let $s: \bar G \to G$ be a section. Then any element $g \in G$ has the form $g = z \tilde g$, where $z \in Z$
and $\tilde g=s(\bar g)$ for some $\bar g \in \bar G$. Since ${\rm cw}(G)=l$, any element $u \in G'=[G, G]$ has the form
$$u=[h_1, g_1] [h_2, g_2] \ldots [h_l, g_l], ~ h_i, ~g_i \in G \hbox{ for }i=1,2,\ldots, l.$$
For $i=1, 2, \ldots, l$, write any element $g_i$ in the form $g_i=z_i \tilde g_i$, $z_i \in Z, ~ \tilde g_i=s(\bar g_i)$.  Then
\begin{eqnarray*}
u&=& [h_1, z_1 \tilde g_1][h_2, z_2 \tilde g_2] \ldots [h_l, z_l \tilde g_l]\\
&=& [h_1, \tilde g_1][h_2,  \tilde g_2] \ldots [h_l, \tilde g_l].
\end{eqnarray*}
Note that arbitrary element $g \in G$ has the form
$$g=x_1^{\alpha_1} \ldots x_n^{\alpha_n} [h_1, \tilde g_1][h_2, \tilde g_2]\ldots [h_l, \tilde g_l],$$
for some integers $\alpha_1, \ldots, \alpha_n$.
By hypothesis, any element $\tilde g_i$ is a product of at most $k$ palindromes and by \lemref{g4}(2),
$[h_i, \tilde g_i]$
is a product of at most $2k+\varepsilon$ palindromes. Hence $u$ is a product of at most $l(2k+\varepsilon)$
palindromes.  Hence $g$ is a product of at most $n+l(2k+\varepsilon)$ palindromes. This proves the proposition.
\end{proof}

\section{Palindromic Width of Solvable Groups: Proof of \thmref{sol1}}\label{solv}

\begin{lemma}\label{lt1}
Let $G=\langle x_1, \ldots, x_n \rangle$ be a finitely generated group and
$H=\langle \langle y_1, \ldots, y_k \rangle \rangle$ be the normal closure of elements $y_1, \ldots, y_k$ in $G$.
Then $\pw(G/H') < \infty$ if and only if $\pw(G/H'')<\infty$.
\end{lemma}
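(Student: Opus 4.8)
The plan is to prove the two implications separately; the forward one is routine and the content lies in $\pw(G/H',X)<\infty\Rightarrow\pw(G/H'',X)<\infty$. For the routine direction: since $H''\le H'$, the canonical map $G/H''\to G/H'$ is an epimorphism taking the image of $X$ onto the image of $X$, so $\pw(G/H')\le\pw(G/H'')$ by \lemref{onto}, whence $\pw(G/H'')<\infty$ forces $\pw(G/H')<\infty$.

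For the converse I would assume $\pw(G/H',\bar X)=d<\infty$ and interpose the normal subgroup $[H',G]$, noting $H''=[H',H']\le[H',G]\le H'$ and $[H',G]\trianglelefteq G$. Two applications of \lemref{r1} — to the normal subgroup $[H',G]/H''$ of $G/H''$ and to $H'/[H',G]$ of $G/[H',G]$ — yield
$$\pw(G/H'',\bar X)\ \le\ d\ +\ \pw\bigl(H'/[H',G],\bar X\bigr)\ +\ \pw\bigl([H',G]/H'',\bar X\bigr),$$
so it remains to bound the last two terms. For the last one put $A=H'/H''$, an abelian normal subgroup of $\bar G:=G/H''$ with $[A,\bar G]=[H',G]/H''$ (using $H''\le[H',G]$). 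As $\bar G=\langle A,\bar x_1,\dots,\bar x_n\rangle$, \lemref{reh1} gives $[A,\bar G]=\{[b_1,\bar x_1]\cdots[b_n,\bar x_n]:b_i\in A\}$; since each $\bar x_i$ is a single letter, hence a palindrome, \lemref{l1}(3) shows $[b_i,\bar x_i]$ is a product of $3$ palindromes, so every element of $[H',G]/H''$ is a product of at most $3n$ palindromes.

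The hard part is the middle term $\pw(H'/[H',G],\bar X)$. Here I would pass to $G^{*}:=G/[H',G]$ and set $\widetilde H:=H/[H',G]$, $Z:=H'/[H',G]$. Since $[Z,G^{*}]=[H',G]/[H',G]=1$ and $[[\widetilde H,\widetilde H],\widetilde H]=[H',H]/[H',G]=1$ (using $[H',H]\le[H',G]$), $Z$ is central in $G^{*}$ and $\widetilde H$ is nilpotent of class $\le 2$, so commutators in $\widetilde H$ lie in the centre and are bilinear. As $\widetilde H$ is generated, as a group, by the conjugates $\widetilde y_i^{\,\widetilde g}$ ($\widetilde g\in G^{*}$, $1\le i\le k$), I would use bilinearity, the identity $[a^{g},b^{g}]=[a,b]^{g}$, and the triviality of $G^{*}$-conjugation on the central $Z$ to collect terms and rewrite every $z\in Z=[\widetilde H,\widetilde H]$ as
$$z=[\widetilde u_1,\widetilde y_1]\,[\widetilde u_2,\widetilde y_2]\cdots[\widetilde u_k,\widetilde y_k],\qquad\widetilde u_j\in\widetilde H,$$
a product of just $k$ commutators whose second entries are the \emph{fixed} words $y_1,\dots,y_k$. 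As $y_j$ is a word of fixed length $\ell_j$ in $X^{\pm1}$, hence a product of $\ell_j$ palindromes, \lemref{g4}(2) gives $l_{\mathcal P}([\widetilde u_j,\widetilde y_j])\le 2\ell_j+1$, so $l_{\mathcal P}(z)\le\sum_{j=1}^k(2\ell_j+1)$. Assembling the bounds, $\pw(G/H'',\bar X)\le d+\sum_{j=1}^k(2\ell_j+1)+3n<\infty$.

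The main obstacle is precisely the estimate for $\pw(H'/[H',G],\bar X)$. The naive hope — that $H'/[H',G]$ is finitely generated as an abelian group, so that a finite transversal suffices — fails (already for $G$ free of rank $2$ and $H$ the normal closure of one generator one gets $H'/[H',G]\cong\bigoplus_{d\ge1}\Z$). What makes it work is that one only needs a bounded \emph{commutator length}, not bounded generation, and this is forced by the class-$2$ nilpotency of $H/[H',G]$ together with the finite normal generation of $H/H'$ in $G/H'$; this is also why the filtration must be routed through $[H',G]$ rather than passing directly from $H''$ to $H'$.
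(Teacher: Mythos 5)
Your argument is correct and follows the paper's proof in all essentials: the same reduction through $[H',G]$, the same $3n$ bound for $[A,\bar G]$ via \lemref{reh1} and \lemref{l1}(3), and the same final estimate from \lemref{g4}(2) after writing every element of $H'/[H',G]$ in the form $[\widetilde u_1,\widetilde y_1]\cdots[\widetilde u_k,\widetilde y_k]$. The only (immaterial) local divergence is how that normal form is justified: you expand commutators bilinearly over the conjugates of the $\widetilde y_i$ generating $\widetilde H$ and collect using centrality, whereas the paper shows the subgroup $K=H_1\cdots H_k$ equals $H'$ by observing that the centralizer $Z_H(H/K)$ is normal in $G$ and contains all the $y_i$.
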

\begin{proof}
Since $G/H'$ is a homomorphic image of $G/H''$, by \lemref{onto}, $\pw(G/H')<\infty$ if $\pw(G/H'')<\infty$.

Suppose that $\pw(G/H')<\infty$. We must prove that $\pw(G/H'')<\infty$.
We may assume $H''=1$. Then by \lemref{r1},
$$\pw(G) \leq \pw(G/[H', G] ) + \pw([H', G]).$$
By \lemref{reh1}
$$[H', G]=\{[h_1, x_1][h_2, x_2] \ldots[h_n, x_n]~|~  h_i \in H'\}.$$
 By \lemref{l1}(3) $\pw([H', G]) \leq 3n$. Hence
$$\pw(G) \leq \pw(G/[H', G]) + 3n.$$
We need to show only that $\pw(G/[H', G])<\infty$.

We may now assume that $[H', G]=1$. Thus $H' \leq Z(G)$ and $H$ is a 2-step nilpotent group. Hence the map
$h\mapsto [h, y_i], ~ h\in H$ is a homomorphism. Let
$$H_i=\{[h, y_i]~|~ h\in H\}.$$
Then $H_i \leq H'\leq Z(G)$ and $H_i$ is normal in $G$. So, $K=H_1 H_2\ldots H_k$ is a normal subgroup of $G$.
Thus the centralizer in $H$ of $H/K$:
$$Z_H(H/K)=\{h \in H ~|~[h, H] \subset K\}$$
is also normal in $G$. However, $y_i \in Z_H(H/K)$ for all $i=1, 2, \ldots, k$. Hence $H/K$ is abelian and
$H'\leq K$. Also, we have $K\leq H'$. This implies,  $K=H'$.

Every element of $K$ can be written in the form
$$
[h_1, y_1][h_2, y_2] \ldots[h_k, y_k], ~~  h_i \in H.
$$
Any $y_i$ is a word in the alphabet $X^{\pm 1}$. Let $m$ be the maximal length $l_X(y_i)$ of $y_i$ in the
alphabet $X^{\pm 1}$ for all
$i = 1, 2, \ldots, k$ then by \lemref{g4}(2) we have $l_{\mathcal{P}}([h_i, y_i]) \leq 2m + 1$.
Hence $\pw(H', X) \leq k(2m+1)$ and
$$\pw(G)\leq \pw(G/H')+k(2m+1).$$
This proves the lemma.
\end{proof}
\subsection{Proof of \thmref{sol1}}
\begin{proof}
Let $X=\{x_1, x_2, \ldots, x_n\}$. Take a minimal subset $Z \subseteq X$ such that $G=\langle A, Z\rangle$.
Without loss of generality, assume that $Z=\{ x_1, x_2, \ldots, x_m\}$, $m \leq n$. Let $M=\langle Z \rangle$
so that $G=AM$. By \lemref{r1},
\begin{equation} \label{eq1} \pw(G, X) \leq \pw(G/[A, G], \bar X) + \pw([A, G], X),\end{equation}
where $\bar X$ is the image of $X$ in $G/[A, G]$. It follows from \lemref{reh1} and \lemref{r1} that
$$\pw([A, G], X) \leq 3n.$$
Further, since we are considering $G/[A, G]$ in \eqnref{eq1}, without loss of generality we may further assume
that $A \leq Z(G)$. Thus we have
$$G'=[AM, AM]=[M, M]=M'.$$
It is therefore sufficient to show that $\pw(M, Z)$ is finite. Note that $M/M \cap A$ is isomorphic to $G/A$
and it satisfies the maximal condition for normal subgroups. Also $A \cap M \leq Z(M)$.

We will now complete the proof by using induction on the solvability length of $M$. If $M \in \mathcal U$,
i.e. $M$ is abelian, then it follows from \cite[Theorem 1]{BG} that $\pw(M, Z) \leq m$. Suppose the result
holds true for $M \in {\mathcal U}^r$. Consider the case when $M \in {\mathcal U}^{r+1}$. Let $L=M \cap A$.
Since $M/L$ satisfies the maximal condition for normal subgroups, there exists a subgroup
$N=\langle \langle y_1, y_2, \ldots, y_k \rangle \rangle$ of $M$ such that $N$ is the normal closure of elements $y_1, \ldots, y_k$ of $M$ and $LN=LM^{(r-1)}$. By induction $M^{(r+1)}=1$, so $(LN)'=N'=M^{(r)}$ is abelian.
Now by \lemref{lt1}, $\pw(M/N')<\infty$ implies that $\pw(M/N'')<\infty$. However, $N^{(2)}=M^{(r+1)}=1$ and
$M/N' \in \mathcal U^r$ and by hypothesis $M/N'$ has a finite palindromic width. Hence the theorem follows.
\end{proof}

\section{Proof of \thmref{unn}} \label{solv3}

Let $\mathcal M$ be some class of groups and $\mathcal N_c$ be the variety of nilpotent groups of nilpotency
class $\leq c$.  Clearly, for $c=1$, $\mathcal N_1=\mathcal U$. Let $\mathcal L_n$ be the class of groups generated by $n$ elements. Denote by $\mathcal L_n \cap \mathcal N_c \mathcal M$ the class of groups that belong to both $\mathcal L_n$ and $\mathcal N_c \mathcal M$.
\begin{lemma}\label{lt2}
For any class $\mathcal M$ of groups and any $n$, every group in  $\mathcal L_n \cap \mathcal N_2 \mathcal M$
has finite palindromic width if and only if every group in  $\mathcal L_n \cap \mathcal N_c \mathcal M$ has
finite palindromic width for $c \geq 2$.
\end{lemma}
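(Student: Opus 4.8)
The plan is to prove the lemma by induction on the nilpotency class $c$, reducing the case of class $c$ to the case of class $c-1$. One direction is trivial: if every group in $\mathcal L_n \cap \mathcal N_c \mathcal M$ has finite palindromic width for all $c \ge 2$, then in particular this holds for $c = 2$. So assume that every group in $\mathcal L_n \cap \mathcal N_2 \mathcal M$ has finite palindromic width, and prove by induction on $c$ that the same holds for $\mathcal L_n \cap \mathcal N_c \mathcal M$; the base case $c = 2$ is the hypothesis.

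For the inductive step, let $G = \langle x_1, \ldots, x_n \rangle \in \mathcal L_n \cap \mathcal N_c \mathcal M$, so there is a normal subgroup $R \trianglelefteq G$ with $G/R$ nilpotent of class $\le c$ and $R \in \mathcal M$. Let $H = \langle\langle x_1, \ldots, x_n \rangle\rangle = G$ — more usefully, consider the subgroup generated inside the nilpotent quotient. The key is to isolate the top of the lower central series of $G/R$: let $N$ be the preimage in $G$ of the last nontrivial term $\gamma_c(G/R)$ of the lower central series of $G/R$, so that $N/R$ is central in $G/R$ and $(G/R)\big/(N/R) \cong G/N$ is nilpotent of class $\le c-1$ with $N \in \mathcal N_1 \mathcal M = \mathcal U \mathcal M$. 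By the inductive hypothesis applied to the quotient — after first handling the abelian-by-$\mathcal M$ layer — $\pw(G/N)$ is finite. Then invoke \lemref{r1} to get
$$\pw(G) \le \pw(G/N) + \pw(N, X),$$
so it remains to bound $\pw(N, X)$.

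To bound $\pw(N, X)$ one should mimic the device used in the proof of \lemref{lt1}: $N$ is an extension of $R \in \mathcal M$ by a central (in $G/R$) subgroup, and the commutator maps collapse nicely. Concretely, $\gamma_c(G/R)$ is generated by basic commutators of weight $c$ in $\bar x_1, \ldots, \bar x_n$, and since $\gamma_{c+1}(G/R) = 1$ each such generator is central in $G/R$; the corresponding elements of $N$ therefore lie in a bounded-rank piece modulo $R$, and each generator, being a commutator of a word of bounded length in $X$ with a generator $x_i$, is a product of a bounded number of palindromes by \lemref{g4}(2). Summing over the finitely many weight-$c$ basic commutators gives $\pw(N/R \text{ part}, X) < \infty$, while the $R \in \mathcal M$ part is absorbed into $\pw(G/N)$ by reducing mod $R$ first; more precisely one should set things up so that one applies \lemref{r1} twice, peeling off the central layer and then the $\mathcal M$-layer, exactly as in \lemref{lt1} where $K = H'$ was shown to be a product of boundedly many commutators $[h_i, y_i]$ each of bounded palindromic length.

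I expect the main obstacle to be the bookkeeping in the inductive step: one must be careful that the generating set is preserved under the quotient maps (so that \lemref{onto} and \lemref{r1} apply with the \emph{same} $n$), and that the subgroup playing the role of the abelian-by-$\mathcal M$ kernel is genuinely in $\mathcal N_1 \mathcal M$ and not merely metabelian-by-$\mathcal M$ — this is why one peels off only the single top layer $\gamma_c/\gamma_{c+1}$ of the lower central series rather than attempting a cruder splitting. The palindromic-length estimates themselves are routine given \lemref{g4}(2), \lemref{l1}, and \lemref{reh1}; the content is in arranging the normal-subgroup tower so that each quotient drops the nilpotency class by exactly one while keeping the class $\mathcal L_n$ intact.
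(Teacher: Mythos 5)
Your proof starts from the wrong decomposition of $\mathcal N_c \mathcal M$, and this derails the whole argument. In the product of classes $\mathcal N_c \mathcal M$ as used throughout the paper (e.g.\ ``abelian-by-nilpotent'' for $\mathcal U\mathcal N$, and explicitly in the proof of \thmref{unn}: ``$G$ contains a normal subgroup $M\in\mathcal N_2$ and $G/M\in\mathcal N$''), the \emph{normal subgroup} is the nilpotent one and the \emph{quotient} lies in $\mathcal M$. You instead take $R\trianglelefteq G$ with $R\in\mathcal M$ and $G/R$ nilpotent of class $\le c$, i.e.\ you are working in $\mathcal M\mathcal N_c$. Consequently your inductive step peels off the top term $\gamma_c(G/R)$ of the lower central series of a nilpotent \emph{quotient}, whereas the statement requires you to lower the nilpotency class of the \emph{kernel}. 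The intended argument is: take $N\trianglelefteq G$ with $\gamma_{m+1}(N)=1$ and $G/N\in\mathcal M$; for $m>2$ the subgroup $A=\gamma_{m-1}(N)$ is abelian and normal in $G$, so by \lemref{reh1} every element of $[A,G]$ is a product of $n$ commutators $[a_i,x_i]$, whence $\pw([A,G],X)\le 3n$ by \lemref{l1}(3), and \lemref{r1} gives $\pw(G)\le \pw(G/[A,G])+3n$ with $G/[A,G]\in\mathcal L_n\cap\mathcal N_{m-1}\mathcal M$, since $\gamma_m(N)=[\gamma_{m-1}(N),N]\le[A,G]$. That closes the induction.

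Even within your own (mis-stated) setting the key estimate is not established. The preimage $N$ of $\gamma_c(G/R)$ contains $R$, and since $\mathcal M$ is an arbitrary class there is no bound available on $\pw(R,X)$; saying the $R$-part is ``absorbed into $\pw(G/N)$ by reducing mod $R$ first'' is circular, because $R\le N$ is killed in the quotient $G/N$, not controlled by it. Likewise, knowing that $\gamma_c(G/R)$ is generated by finitely many basic commutators of weight $c$ does not bound the palindromic length in the alphabet $X^{\pm1}$ of an arbitrary product $\prod_j c_j^{k_j}$ of their powers: \lemref{g4}(2) bounds the length of a single commutator, and neither \lemref{l1}(1) nor \lemref{g4}(1) applies to powers of a product of three palindromes. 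The device that actually makes the reduction work is \lemref{reh1}, which rewrites every element of $[A,G]$, for $A$ abelian and normal, as a product of exactly $n$ commutators with single generators $x_i$ in the second slot --- and that device is available precisely because the nilpotent factor is the kernel.
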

\begin{proof}
If every group in  $\mathcal L_n \cap \mathcal N_c \mathcal M$, $c \geq 2$, has finite palindromic width, then
evidently every group in  $\mathcal L_n \cap \mathcal N_2 \mathcal M$ has finite palindromic width. We will use
induction on $c$ to prove the converse. If $c=2$ then there is nothing to prove. Assume the result holds for
$c =m-1$.  Let $G \in \mathcal L_n \cap \mathcal N_m \mathcal M$. Hence there is a normal subgroup $N$ of $G$
such that $G/N \in \mathcal M$ and $N \in \mathcal N_m$. Thus $\gamma_{m+1}(N)=1$. Since $m>2$,
$\gamma_{m}(N) = [\gamma_{m-1}(N), G]$ is abelian and hence by \lemref{r1}
$$\pw(G) \leq \pw(G/[\gamma_{m-1}(N), G]) + n.$$
Since $G/[\gamma_{m-1}(N), G] \in  \mathcal L_n \cap \mathcal N_{m-1} \mathcal M$, hence $\pw(G)<\infty$. This proves the lemma.
\end{proof}

\subsection{Proof of \thmref{unn}}
\begin{proof}
Let $G$ be a finitely generated group in $\mathcal U \mathcal N \mathcal N$. Then for some integers
$m, l$, $\gamma_m(\gamma_l(G))=A$ is abelian and normal in $G$. Thus by \lemref{r1} and \lemref{reh1},
$$\pw(G) \leq \pw(G/[A, G])+3n,$$
where $n$ is the number of generators of $G$. Now
$$\gamma_{m+1}(\gamma_l(G))=[A, \gamma_l(G)] \subset [A, G],$$
so that $G/[A, G] \in \mathcal L_n \cap \mathcal N_m \mathcal N_{l-1}.$ It is sufficient to show that any
finitely generated group in $\mathcal N \mathcal N$ has finite palindromic width. By \lemref{lt2}, it is enough
to show that any group in the class $\mathcal L_n \cap \mathcal N_2 \mathcal N$ has finite palindromic width.

We may now suppose that $G \in \mathcal L_n \cap \mathcal N_2 \mathcal N$ so that $G$ contains a normal
subgroup $M \in \mathcal N_2$ and $G/M \in \mathcal N$. Now $G/M'$ is finitely generated group in the class
$\mathcal U \mathcal N$. We know from \cite[Proposition 3.7]{BG} that the palindromic width of these groups is
bounded above by $5n$. Furthermore $M'$ is abelian and normal in $G$ and consequently has finite palindromic width. Hence $G$ has finite palindromic width.
\end{proof}

\section{ Palindromic and Commutator Widths of Wreath Products $\Z \wr \Z$} \label{wrs}
We have proven that if $F_n(\mathcal U^2)$ is a free metabelian group of rank $n$, then
$\pw(F_n(\mathcal U^2))\leq 4n-1.$ We think that this boundary is rough. For some metabelian groups we can
find the faithful value of palindromic width. For example, if
$$
BS(1,n) = \langle a, t~||~t^{-1} a t = a^n \rangle,~~n \in \mathbb{Z} \setminus \{0\}
$$
is a metabelian Baumslag-Solitar group, then we can prove

\begin{prop}
$\pw(BS(1,n), \{a, t \})=2$.
\end{prop}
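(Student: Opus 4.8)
The plan is to handle the two inequalities $\pw(BS(1,n),\{a,t\})\le 2$ and $\pw(BS(1,n),\{a,t\})\ge 2$ separately. For the upper bound, recall that $BS(1,n)$ is metabelian: the normal closure $A=\langle\langle a\rangle\rangle$ is abelian (it is the additive group $\mathbb Z[1/n]$ on which $t$ acts by multiplication by $n$), and $BS(1,n)/A\cong\mathbb Z=\langle \bar t\rangle$. Thus every element $g\in BS(1,n)$ can be written as $g=u\,t^{\beta}$ with $u\in A$ and $\beta\in\mathbb Z$. Since $A=\langle\langle a\rangle\rangle$ with a single generator $a$, \lemref{reh1} (applied with the one generator $t$ modulo the normalizing action) tells us that the derived subgroup, and more usefully $A$ itself, is swept out by elements of the form $[a^{k},t^{j}]$ times powers of $a$; concretely a general $u\in A$ has the shape $u=t^{-\beta_0}a^{\gamma}t^{\beta_0}$ for suitable integers, i.e. $u$ is conjugate of a power of $a$. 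The element $a^{\gamma}$ is a palindrome, so $u=t^{-\beta_0}a^{\gamma}t^{\beta_0}$ is a palindrome as well (it reads the same both ways). Hence $g=u\cdot t^{\beta}$ is a product of the palindrome $u$ and the palindrome $t^{\beta}$, giving $\pw(g)\le 2$ for every $g$, so $\pw(BS(1,n),\{a,t\})\le 2$. (The one case needing a line of care is $u$ lying in $A$ but not literally of the form $t^{-\beta_0}a^{\gamma}t^{\beta_0}$: a general element of $\mathbb Z[1/n]$ is a $\mathbb Z$-linear combination of such, but since $A$ is abelian one can telescope the conjugating powers of $t$ so that $u$ equals a single conjugate $t^{-\beta_0}a^{\gamma}t^{\beta_0}$ — this is exactly the statement that $\mathbb Z[1/n]=\bigcup_{j\ge 0} n^{-j}\mathbb Z$.)

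For the lower bound I must exhibit a single element of palindromic length $\ge 2$, i.e. an element that is not itself a palindrome. The natural candidate is $g=a t$ (or $t a$). I would argue via the abelianization, or more precisely via a carefully chosen quotient that detects the failure. Map $BS(1,n)\twoheadrightarrow \mathbb Z^2$ by $a\mapsto(1,0)$, $t\mapsto(0,1)$ when $n=1$; but for general $n$ the abelianization is $\mathbb Z\oplus\mathbb Z/(n-1)$, which is still enough. A palindrome in $\{a^{\pm1},t^{\pm1}\}$ of odd length has a middle letter $x$ and reads $w\,x\,\tilde w$ where $\tilde w$ reverses $w$; of even length it reads $w\tilde w$. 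In either case, reading off the exponent sum of $t$ and using the palindrome symmetry, one sees that a palindrome of length $\ge 2$ that represents $at$ would have to have a very constrained form, and a short case analysis on small-length palindromes (lengths $1$ and $2$, since anything representing $at$ lives close to the generators) rules out $at$ being a palindrome. Concretely: if $at=p$ with $p$ a palindrome, examining $p$ in the normal form $u t^{\beta}$ forces $\beta=1$ and $u=a$, but the palindromes with $t$-exponent-sum $1$ are (up to the abelian structure) of the form $t^{-k}a^{\epsilon}t^{k+1}$ or words beginning and ending with $t^{\pm 1}$ with a palindromic middle, and none of these equals $at$ in the group — one checks this by pushing all the $t$'s to the right using $t^{-1}at=a^{1/n}$, i.e. $at = a t$ has $A$-component $a$, while $t^{-k}a^{\epsilon}t^{k+1}$ has $A$-component $t^{-k}a^{\epsilon}t^{k}$, equal to $a^{\epsilon}$ viewed in $\mathbb Z[1/n]$ only if $k=0$, and then the word is $a^{\epsilon}t$, which is $at$ only if $\epsilon=1$, but $a^{1}t$ is not a palindrome (it reads $ta^{-1}$... wait — it reads $t^{-1}a^{-1}$ backwards) — so no genuine single palindrome works. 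Thus $\pw\ge 2$.

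I expect the main obstacle to be the lower bound rather than the upper bound: the upper bound is essentially the observation that metabelian normal form $=$ (conjugate of a power of $a$) $\times$ (power of $t$), both palindromes, whereas the lower bound requires ruling out \emph{all} single palindromes as representatives of some fixed element, and the cleanest way to do that is to find a homomorphism to a group where palindromic length is visibly $\ge 2$ for the image. A robust choice: use the retraction $BS(1,n)\to \mathbb Z$ killing $a$ together with the fact that in $\mathbb Z = \langle \bar t\rangle$ every element is a palindrome — that doesn't help; instead use the semidirect product structure to define $\phi\colon BS(1,n)\to \mathbb Z\wr(\mathbb Z/2)$ or simply observe directly that a palindrome $p$ satisfies $p = \overline{p}$ where $\overline{\cdot}$ is the anti-automorphism fixing $X$, so $p\overline{p}^{-1}=1$; computing $(at)\overline{(at)}^{-1} = (at)(ta)^{-1}= at a^{-1}t^{-1} = [a^{-1}? ]$ — this is the commutator-like element $a\,t\,a^{-1}t^{-1}$, which equals $a\cdot(t a^{-1} t^{-1}) = a\cdot a^{-1/n}=a^{1-1/n}=a^{(n-1)/n}\ne 1$ since $n\ne 1$; and for $n=1$ one instead takes $g=a^2t$ or notes $at a^{-1}t^{-1}=1$ forces a separate argument via the $\mathbb Z^2$ abelianization where $at$ has coordinates $(1,1)$ and the only palindromes are words with a symmetry forcing even first coordinate or the middle letter to contribute, etc. I would streamline all of this into: (i) state and prove the anti-automorphism criterion ``$p$ is a palindrome $\iff p=\overline p$''; (ii) for $n\ne 1$ take $g=at$ and compute $g\overline g^{-1}=a^{(n-1)/n}\ne 1$; (iii) for $n=1$ take $g=at$ and use the $\mathbb Z^2$ abelianization to see no palindrome maps to $(1,1)$. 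Combined with the upper bound, this yields $\pw(BS(1,n),\{a,t\})=2$.
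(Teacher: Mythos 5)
There are two genuine gaps here, one in each direction. For the upper bound, your key step asserts that $u=t^{-\beta_0}a^{\gamma}t^{\beta_0}$ ``reads the same both ways.'' It does not: read backwards, the word $t^{-\beta_0}a^{\gamma}t^{\beta_0}$ becomes $t^{\beta_0}a^{\gamma}t^{-\beta_0}$, so a nontrivial conjugate of a palindrome is not a palindrome (compare Lemma~\ref{l1}(2), which only gives that a conjugate of one palindrome is a product of \emph{two} palindromes --- using that here would yield the bound $3$, not $2$). The repair is exactly the paper's decomposition: from the normal form $g=t^{k}a^{l}t^{-m}$ with $k,m\ge 0$ one writes $g=\left(t^{k}a^{l}t^{k}\right)\cdot t^{-m-k}$, putting the \emph{same} power of $t$ on both sides of $a^{l}$ so that the first factor is genuinely palindromic. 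Your telescoping remark about $\mathbb Z[1/n]=\bigcup_j n^{-j}\mathbb Z$ is correct and gives you this normal form, so the fix is one line, but as written the step is false.

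The lower bound is the more serious problem. Your criterion ``$p$ is a palindrome $\iff p=\overline{p}$'' presupposes that the reversal map (equivalently, the map inverting each generator) descends to $BS(1,n)$, and it does not when $n^{2}\neq 1$: applying it to the relator $t^{-1}ata^{-n}$ would require $tat^{-1}=a^{n}$, whereas in $BS(1,n)$ one has $tat^{-1}=a^{1/n}$ in $\mathbb Z[1/n]$. Consequently $\overline{g}$ is not well defined on group elements, and your computation $(at)\overline{(at)}^{-1}=a^{(n-1)/n}\neq 1$ only shows that the particular \emph{word} $at$ is not a palindrome; it does not exclude that some other word representing the same group element is palindromic, which is what must be ruled out. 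Your fallback case analysis is too vague to close this (for instance, $t^{-k}a^{\epsilon}t^{k+1}$ is not a palindrome, so the claimed list of palindromes with $t$-exponent sum $1$ is wrong). A correct route is either to classify palindromes via their normal forms $wc\overline{w}$, or to pass to a quotient where palindromes are constrained: the abelianization $\mathbb Z\oplus\mathbb Z/(n-1)$ works when $n-1$ is even (the image of any palindrome lies in $2(\mathbb Z\oplus\mathbb Z/(n-1))\pm e_{i}$, which misses the image of $at$), but fails for example when $n=2$, so some additional argument is needed in general. To be fair, the paper itself only asserts this direction (``it is not difficult to prove that $ta$ is not a palindrome''), but your proposed justification would not be a valid substitute for it.
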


\begin{proof}
It is not difficult to prove (see for example \cite[Lemma 11]{B1}) that every element $g \in BS(1,n)$ can
be presented in the form $g = t^k a^l t^{-m}$ for some non-negative integers $k, m$ and some integer $l$. Then
$$
g = t^k a^l t^k \cdot t^{-m-k}
$$
is the product of 2 palindromes. On the other side it is not difficult to prove, that element $t a$ is not
a palindrome.
\end{proof}

In this section, we aim to obtain palindromic width of $G=\Z \wr \Z$ where the palindromes are considered with
respect to the generating set $\{ a^{\pm 1}, b^{\pm 1}\}$, where $a$ is the generator of the first cyclic
group in the product and $b$ is the generator of the second cyclic group.
Let $A=\langle a \rangle \cong \Z$, $B=\langle b \rangle \cong \Z$. Then
$$G=\Z \wr \Z=A \wr B.$$
Let $a_0=a$ and $A_0=A=\langle a_0 \rangle$. Define the infinite set of groups $A_i=\langle a_i \rangle$ which are isomorphic copies of $A$ and $i \in \Z$. Let $C=\prod_{i \in Z}  A_i$ be the direct products of the $A_i$'s. We can identify the group $C$ as a subgroup of the group $F(B, A)$ of functions from $B$ to $A$,   where any function sends the element $b^i$ onto some element in $A_i$. If $f \in  C \subset F(B, A)$, then we will write
$$f=\prod_{i \in \Z} a_i^{n_i}, ~ n_i \in \Z, ~ f(b^i)=f(i)=a_i^{n_i},$$
where only finitely many numbers $n_i$ are non-zero. The group $B$ acts on $C$ by the rule
$$f^b=\prod_{i \in \Z} a_{i+1}^{n_i}.$$
This action induces an automorphism of $C$. Then
$G=A \wr B=C \ltimes B$,
where $\ltimes$ denotes the semi-direct product.

For any $f \in C$, define ${\rm supp}(f)$ to be the set of indices $i$ such that $n_i \ne 0$. For any $f \in C$, ${\rm supp}(f)$ is a finite set. Also we have the following conjugation rules for arbitrary integers $k$ and $l$:
$$b^{-k} A_l b^k=A_{l+k}.$$
\subsection{Commutator Subgroup of $G$}
\begin{lemma}\label{L1}
The commutator subgroup $G'=[G, G]$ is equal to the group $[C, B]$, where $[C, B]$ is a subgroup of $G$ generated by the commutators $[c, b^l]$ for $c \in C$, $l \in \Z$.
\end{lemma}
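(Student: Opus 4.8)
The statement asserts that for $G = \Z \wr \Z = C \ltimes B$, the commutator subgroup $G'$ coincides with the subgroup $[C,B]$ generated by all commutators $[c,b^l]$, $c \in C$, $l \in \Z$. My plan is to prove the two inclusions separately, with the easy direction being $[C,B] \subseteq G'$ (immediate, since any $[c,b^l]$ is a commutator of elements of $G$), and the substantive direction being $G' \subseteq [C,B]$.

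For the substantive inclusion, the key observation is that $G/C \cong B \cong \Z$ is abelian, so $G' \subseteq C$; thus every element of $G'$ lies in $C$, which is abelian. It therefore suffices to show that every \emph{generator} of $G'$ — every commutator $[g_1, g_2]$ with $g_i \in G$ — lies in $[C,B]$, because $[C,B]$ is normal in $G$ (it is the kernel-type subgroup $[C,B]$, stable under conjugation by both $C$, which centralizes it mod higher terms — actually $C$ is abelian so it centralizes $C$ — and by $B$, since $[c,b^l]^{b} = [c^b, b^l]$ and $c^b \in C$). Write $g_i = c_i b^{k_i}$ with $c_i \in C$, $k_i \in \Z$. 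Expanding $[c_1 b^{k_1}, c_2 b^{k_2}]$ using the commutator identities $[xy,z] = [x,z]^y[y,z]$ and $[x,yz] = [x,z][x,y]^z$, and using that $C$ is abelian so $[c_1, c_2] = 1$, the commutator collapses into a product of terms each of the form $[c, b^l]^{c'}$ with $c, c' \in C$, $l \in \Z$; since $C$ is abelian and $[c,b^l] \in C$, conjugation by $c' \in C$ is trivial, so each such term is just $[c, b^l] \in [C,B]$. Hence $[g_1,g_2] \in [C,B]$, and since $[C,B]$ is a subgroup, $G' \subseteq [C,B]$.

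The main obstacle — really a bookkeeping point rather than a deep one — is organizing the commutator-calculus expansion cleanly and making sure all conjugating factors that appear land back inside $C$ so that the abelianness of $C$ can be invoked to kill them. Concretely, one must be careful that a factor like $[b^{k_1}, b^{k_2}]$ vanishes (it does, $B$ being abelian) and that cross terms such as $[c_1, b^{k_2}]$ and $[b^{k_1}, c_2]$ are of the required form (the latter equals $[c_2, b^{k_1}]^{-1}$ up to conjugation, again inside $C$). Alternatively, and perhaps more cleanly, I would phrase it as: the quotient $G/[C,B]$ is generated by the images of $C$ and $b$; the image of $C$ is central there because $[C, C] = 1$ and $[C, b] \subseteq [C,B]$ maps to $1$; and $b$ commutes with itself; so $G/[C,B]$ is abelian, giving $G' \subseteq [C,B]$ directly. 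This avoids explicit expansion entirely and is the route I would take in the write-up, recording the elementary verification that $[C,B] \trianglelefteq G$ as the only computation needed.
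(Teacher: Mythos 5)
Your proof is correct, but your preferred route differs from the paper's. The paper proves $G'\subseteq [C,B]$ by a single explicit computation: it takes an arbitrary commutator $[cb^k,c'b^l]$, pushes the powers of $b$ past the elements of $C$ using the conjugation action, and exhibits the result concretely as $[c_k,b]\,[c'_{k+l},b^{-k}]$ — a product of exactly two generators of $[C,B]$. Your write-up route instead verifies that $[C,B]$ is normal in $G$ (conjugation by $C$ fixes the generators since $[c,b^l]\in C$ and $C$ is abelian; conjugation by $b$ sends $[c,b^l]$ to $[c^b,b^l]$) and then observes that $G/[C,B]$ is generated by pairwise commuting elements, hence abelian, so $G'\subseteq[C,B]$. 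Both arguments rest on the same three facts ($C$ abelian, $B$ abelian, $B$ normalizes $C$); yours avoids all bookkeeping and is the cleaner way to get the bare set-theoretic inclusion, while the paper's expansion yields the extra quantitative information that every commutator of $G$ is a product of two elements of the form $[c,b^l]$ (information the paper does not ultimately need, since Lemma \ref{t3} later shows every element of $G'$ is a \emph{single} commutator by a separate argument). One small caveat on your first sketch: after applying $[xy,z]=[x,z]^y[y,z]$ the conjugating elements that appear are powers of $b$, not elements of $C$ as you wrote; this is harmless because $[c,b^m]^{b^k}=[c^{b^k},b^m]$ stays in the generating set, but it is the $B$-conjugation case, not the $C$-centralizing case, that does the work there — exactly the normality computation you record in your second route.
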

\begin{proof}
The inclusion $[C, B]\subset [G, G]$ is evident. To prove the reverse inclusion, consider any arbitrary commutator $[cb^k, c'b^l]$ in $[G, G]$, where $c, c' \in C, ~ k, l \in \Z$. Then
\begin{eqnarray*}
[cb^k, c'b^l] & = & b^{-k} c^{-1} b^{-l} (c')^{-1} c b^k c' b^l \\
&=& (c^{-1})^{b^k} b^{-k} b^{-l} (c')^{-1} c b^k b^l (c')^{b^l}\\
&=& c_k^{-1} (c')^{-b^{k+l}} c^{b^{k+l}} (c')^{b^l}, \hbox{ where }c_k=b^{-k} c b^{k}\\
&=& c_k^{-1} (c'_{k+l})^{-1} c_{k+1} c_l' \\
&=& c_k^{-1} c_{k+1} (c'_{k+l})^{-1} c'_l = [c_k, b][c'_{k+l}, b^{-k}].
\end{eqnarray*}
Hence $[G, G] \subset [C, B]$. This proves the lemma.
\end{proof}

\medskip We represent $c \in G$ as a product
 \begin{equation}\label{c1}
c=\prod_{i \in \Z}a_i^{n_i}, ~ n_i \in \Z, \end{equation}
 or as function $c: \Z \to \Z$ given by $c(i)=n_i$. Define a function $\log: C \to \Z$ as
$$\log c=\sum_{i \in \Z} n_i.$$
From \lemref{L1} we have the following.
\begin{cor}\label{wrc1}
The commutator subgroup $G'$ is contained in $C$. An element $c \in C$ belongs to $G'$ if and only if $\log c=0$.
\end{cor}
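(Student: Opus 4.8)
The plan is to deduce both statements directly from Lemma \ref{L1}, using the description of $G'$ as $[C,B]$ together with the explicit conjugation action of $B$ on $C$. First I would observe that $[C,B] \subseteq C$: since $C$ is normal in $G$ and $B$ normalizes $C$, every commutator $[c,b^l] = c^{-1}(c^{-1})^{b^l}$ with $c \in C$ lies in $C$, because $(c^{-1})^{b^l} \in C$. Hence $G' = [C,B] \subseteq C$. This handles the first assertion.

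For the second assertion, the key point is that $\log \colon C \to \Z$ is a group homomorphism (since $C$ is abelian, being a direct product of the cyclic groups $A_i$, and $\log$ simply adds up exponents), and moreover $\log$ is invariant under the $B$-action: the action $f^b = \prod_{i} a_{i+1}^{n_i}$ merely shifts the indices, so $\log(f^b) = \sum_i n_i = \log f$. Therefore for any $c \in C$ and $l \in \Z$ we get $\log[c, b^l] = \log\big(c^{-1} (c^{-1})^{b^l}\big) = -\log c + \log(c^{-1}) \cdot(\text{shift})^{-1}$, which works out to $-\log c + \log c = 0$. Since the generators $[c, b^l]$ of $G' = [C,B]$ all lie in the kernel of $\log$, and the kernel of a homomorphism is a subgroup, we conclude $G' \subseteq \ker(\log) = \{c \in C : \log c = 0\}$.

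It remains to prove the reverse inclusion $\ker(\log) \subseteq G'$. Given $c = \prod_{i \in \Z} a_i^{n_i}$ with $\sum_i n_i = 0$, I would exhibit $c$ explicitly as a product of commutators of the form $[a^{m}, b^{l}] = a^{-m} a_l^{m}$ (that is, $[a_0^m, b^l]$, which equals $a_0^{-m} a_l^{m}$ by the conjugation rule $b^{-l} A_0 b^l = A_l$). The point is that such a commutator contributes $-m$ to the exponent at index $0$ and $+m$ at index $l$. Since the total exponent sum of $c$ is zero, one can bookkeep the exponents $n_i$ and write $c$ as a telescoping product of such "elementary" commutators, transporting all the positive exponents from their slots and cancelling against the negative ones through slot $0$; concretely, order the nonzero indices and peel them off one at a time. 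Because $C$ is abelian, the order in which these elementary commutators are multiplied does not matter, so the bookkeeping is routine.

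The main obstacle — really the only nontrivial point — is the reverse inclusion, i.e. confirming that every zero-sum element of $C$ is genuinely a product of commutators lying in $G'$; but this is immediate once one notes that $[a_0^{n}, b^{i}] = a_0^{-n} a_i^{n} \in G'$ for all $n, i \in \Z$ and that these elements, for varying $i$, generate precisely $\ker(\log)$ (one can see the latter by induction on the size of $\mathrm{supp}(c)$). Everything else is a direct consequence of Lemma \ref{L1} and the fact that $\log$ is a $B$-invariant homomorphism on the abelian group $C$.
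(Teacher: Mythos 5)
Your argument is correct and, since the paper states this corollary without proof (as an immediate consequence of \lemref{L1}), it is in fact more detailed than what the paper records. The forward direction is exactly the intended one: $\log$ is a $B$-invariant homomorphism on the abelian group $C$, so every generator of $[C,B]=G'$ lies in $\ker(\log)$ --- note the small slip in your displayed formula, where you wrote $[c,b^l]=c^{-1}(c^{-1})^{b^l}$ instead of the correct $c^{-1}c^{b^l}$; your subsequent computation $-\log c+\log c=0$ shows you meant the latter, and the conclusion is unaffected. For the reverse inclusion your route differs from the paper's: you exhibit a zero-sum element $c=\prod_i a_i^{n_i}$ as a product of elementary commutators, e.g. $\prod_{i\neq 0}[a_0^{n_i},b^i]=a_0^{-\sum_{i\neq 0}n_i}\prod_{i\neq 0}a_i^{n_i}=c$ since $-\sum_{i\neq 0}n_i=n_0$, which is clean and genuinely elementary (the induction on $|\mathrm{supp}(c)|$ you gesture at is routine and could even be replaced by this one-line identity). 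The paper instead obtains this direction only implicitly through \lemref{t3}, where a single $f\in C$ is constructed with $[f,b]=c$ by solving a triangular linear system whose consistency condition is exactly $\sum_i n_i=0$. Your approach buys a short, self-contained proof of the corollary; the paper's buys the stronger statement that every element of $G'$ is a \emph{single} commutator, which it needs afterwards to get $\pw(G,\{a,b\})\le 3$.
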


\begin{lemma}\label{t3}
Any element in the commutator subgroup $G'$ is a commutator.
\end{lemma}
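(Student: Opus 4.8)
The plan is to show that an arbitrary element $c \in G'$, which by \corref{wrc1} lies in $C$ and satisfies $\log c = 0$, can be written as a single commutator $[d, b]$ for a suitable $d \in C$. Writing $c = \prod_{i \in \Z} a_i^{n_i}$ with $\sum_i n_i = 0$, I compute $[d, b] = d^{-1} d^b$ for $d = \prod_{i} a_i^{m_i}$; using the action $d^b = \prod_i a_{i+1}^{m_i}$, one gets $[d,b] = \prod_i a_i^{m_{i-1} - m_i}$. So the task reduces to solving the linear recursion $n_i = m_{i-1} - m_i$ for the unknowns $m_i$, given the $n_i$.

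The key step is to check this recursion is solvable within $C$, i.e. that only finitely many $m_i$ are nonzero. Since $\mathrm{supp}(c)$ is finite, choose $N$ so that $n_i = 0$ for $|i| > N$. Set $m_i = 0$ for all $i \ge N$ (say), and then define $m_{i-1} = n_i + m_i$ recursively going downward; this determines all $m_i$. For $i \le -N-1$ we have $m_{i-1} = n_i + m_i = m_i$, so the sequence $(m_i)$ is eventually constant as $i \to -\infty$, with limiting value $m_{-\infty} = -\sum_{i \le N} n_i$ (the telescoping sum). But $\sum_{\text{all }i} n_i = \log c = 0$ and $n_i = 0$ for $i > N$, so $\sum_{i \le N} n_i = 0$, whence $m_{-\infty} = 0$; since the sequence is eventually $0$ for large negative $i$ and identically $0$ for $i \ge N$, only finitely many $m_i$ are nonzero. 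Hence $d = \prod_i a_i^{m_i} \in C$ is well-defined and $[d, b] = c$.

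I expect the only subtlety — and the "main obstacle" in a purely formal sense — to be precisely this check that the telescoping does not force infinitely many nonzero $m_i$; this is exactly where the hypothesis $\log c = 0$ from \corref{wrc1} is used, and without it one would instead land in $C \rtimes B$ rather than $C$. Everything else (that $d^b$ has the claimed form, that $[d,b] = d^{-1}d^b$ computes as stated) is a direct unwinding of the definitions of the wreath product action given above. I would present the argument by exhibiting $d$ explicitly via the recursion, then verifying $[d,b] = c$ by comparing exponents index by index, and finally noting finiteness of the support of $d$ as above. This also immediately recovers that $\mathrm{cw}(\Z \wr \Z) = 1$, as remarked in the introduction.
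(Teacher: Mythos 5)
Your argument is correct and is essentially the paper's own proof: both solve the telescoping recursion $n_i = m_{i-1}-m_i$ with a one-sided zero boundary condition and use $\log c = 0$ to ensure the resulting $d$ has finite support (the paper anchors at the bottom index and checks consistency of the last equation at the top, while you anchor at the top and check stabilization at $0$ as $i\to-\infty$). The only blemish is a harmless sign slip in your stated limiting value, which should be $+\sum_{i\le N} n_i$ rather than $-\sum_{i\le N} n_i$; since this sum is $0$ by hypothesis, the conclusion is unaffected.
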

\begin{proof}
Let $c \in G'$. We want to find an element $f \in C$ be such that $[f, b]=c$. To do this denote by $m$ the
minimal integer $m$ such that $n_m \neq 0$ in the representation of \eqnref{c1} and by $M$ the maximal
integer $M$ such that $n_M \neq 0$ in \eqnref{c1}. Hence the element $c$ has the form
$$c=a_m^{n_m} a_{m+1}^{n_{m+1}} \ldots a_{m+s-1}^{n_{m+s-1}} a_{m+s}^{n_{m+s}},$$
where $M=m+s$ and by \corref{wrc1},
\begin{equation} \label{e''} n_m + n_{m+1} + \ldots + n_{m+s-1} + n_{m+s}=0.\end{equation}
We will find the element $f$ in the form
$$f=a_m^{x_m} a_{m+1}^{x_{m+1}} \ldots a_{m+s-1}^{x_{m+s-1}}$$
with unknowns $x_m$, $x_{m+1}$, $\ldots$, $x_{m+s-1}$.
Then the commutator
\begin{eqnarray*}[f, b]=f^{-1} f^b&=&a_m^{-x_m} a_{m+1}^{-x_{m+1}} \ldots a_{m+s-1}^{-x_{m+s-1}} \cdot
 a_{m+1}^{x_m} a_{m+2}^{x_{m+1}} \ldots a_{m+s-1}^{x_{m+s-2}} a_{m+s}^{x_{m+s-1}}\\
&=& a_m^{-x_m}a_{m+1}^{x_m-x_{m+1}} a_{m+2}^{x_{m+1} -x_{m+2}} \ldots a_{m+s-1}^{x_{m+s-2} -x_{m+s-1}} a_{m+s}^{x_{m+s-1}}.
\end{eqnarray*}
Note that the equation $[f, b]=c$ gives us the following system of equations:
\begin{eqnarray*}
-x_m&=&n_m\\
-x_{m+1} + x_m&=&n_{m+1}\\
-x_{m+2} + x_{m+1} &=& n_{m+2} \\
\vdots\\
-x_{m+s-1} + x_{m+s-2}&=&n_{m+s-1} \\
x_{m+s-1} &=&  n_{m+s}.
\end{eqnarray*}
From the first $s$ equations we find
\begin{eqnarray*}
x_m&=& -n_m \\
x_{m+1} &=& x_m-n_{m+1} =-n_m -n_{m+1}, \\
x_{m+2} & = & x_{m+1} -n_{m+2}  =  -n_m -n_{m+1} -n_{m+2} ,\\
& & \vdots \\
x_{m+s-1} & = & x_{m+s-2} -n_{m+2} =-n_m -n_{m+1} -\cdots -n_{m+s-1}.
\end{eqnarray*}
Inserting this value of $x_{m+s-1}$ in the last equation we have
$$-n_m-n_{m+1} - \cdots-n_{m+s-1} = n_{m+s},$$
or
$$n_m + n_{m+1} + \cdots +n_{m+s-1} +n_{m+s} =0.$$
Since $c \in G'$ we see in \eqnref{e''} that this equality holds. So, we have a well-defined solution to the above system of equations and $f$ exists in $C$ such that $[f, b]=c$. This proves the theorem.
\end{proof}

\begin{cor} \label{wrcc1}
$\pw(G, \{a, b\}) \leq 3$.
\end{cor}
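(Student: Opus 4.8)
The plan is to prove that every $g\in G=\Z\wr\Z$ is a product of at most three palindromes in the alphabet $\{a^{\pm1},b^{\pm1}\}$. Write $g$ in the semidirect-product normal form $g=c\,b^{k}$ with $c\in C$, $k\in\Z$, and set $N=\log c$. By \corref{wrc1} the element $v:=c\,a^{-N}$ lies in $G'\subseteq C$, so by \lemref{t3} we may write $v=[f,b]$ with $f\in C$; hence $c=[f,b]\,a^{N}$ and
$$g=f^{-1}b^{-1}fb\,a^{N}b^{k}.$$

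Next I would do a short bookkeeping computation inside $C\ltimes B$. Using the conjugation rules $b\,a_{i}\,b^{-1}=a_{i-1}$, $b^{-1}a_{i}\,b=a_{i+1}$ and the commutativity of $C$, one moves the factor $a^{N}=a_{0}^{N}$ to the left past $b$, past $f$, and past $b^{-1}$, arriving at $g=f^{-1}a^{N}\,b^{-1}f\,b^{k+1}$. Substituting $h:=f\,a^{-N}\in C$ (so $f^{-1}a^{N}=h^{-1}$, $f=h\,a^{N}$) turns this into
$$g=h^{-1}b^{-1}(h\,a^{N})\,b^{k+1}=\bigl(h^{-1}b^{-1}h\bigr)\,a^{N}\,b^{k+1}.$$
So $g$ is the conjugate of the palindrome $b^{-1}$ by $h\in C$, times the palindrome $a^{N}$, times the palindrome $b^{k+1}$. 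By \lemref{l1}(2) the conjugate $h^{-1}b^{-1}h$ is a product of two palindromes, which only shows $g$ to be a product of four; the whole point is to save one.

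The saving comes from decomposing $h^{-1}b^{-1}h$ the right way. Writing $h=\prod_{i}a_{i}^{y_{i}}$, let $\tilde h:=\prod_{i}a_{i}^{y_{-i}}$ be its reflection — equivalently $\tilde h=\rho(h)$, where $\rho$ is the word-reversal anti-automorphism of $G$ (well defined because the defining relations of $\Z\wr\Z$ are palindromic; it fixes $a,b$ and sends $a_{i}\mapsto a_{-i}$). Then $h^{-1}b^{-1}h=\bigl(h^{-1}b^{-1}\tilde h^{-1}\bigr)\bigl(\tilde h\,h\bigr)$, and both factors are palindromes: $\tilde h\,h=\prod_{i}a_{i}^{\,y_{i}+y_{-i}}$ is a \emph{symmetric} element of $C$ (equal $i$-th and $(-i)$-th exponents), and such an element is a palindrome — with support in $[-s,s]$ it is the palindromic word $b^{s}a^{z_{-s}}b^{-1}a^{z_{-s+1}}b^{-1}\cdots b^{-1}a^{z_{s}}b^{s}$ with $z_{i}=z_{-i}$ — while $h^{-1}b^{-1}\tilde h^{-1}$ is fixed by $\rho$ and hence again a palindrome. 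Crucially, $a^{N}=a_{0}^{N}$ is itself symmetric, so $\tilde h\,h\,a^{N}$ is still a symmetric element of $C$, hence a palindrome, and
$$g=\bigl(h^{-1}b^{-1}\tilde h^{-1}\bigr)\bigl(\tilde h\,h\,a^{N}\bigr)\,b^{k+1}$$
is a product of three palindromes. Thus $\pw(G,\{a,b\})\le3$.

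The delicate point is exactly this final merging: the computation produces four palindromes on the nose, and one must arrange the ``abelian tail'' $a^{N}$ to be absorbable into one of the two palindromes coming from $h^{-1}b^{-1}h$. That is what forces the particular route above — the conjugating element $h$ must lie in $C$ and the leftover power of $a$ must be centred at index $0$, i.e.\ symmetric; a sloppier manipulation leaves a factor $a_{j}^{N}$ with $j\neq0$, which is not symmetric and cannot be combined with the symmetric palindrome $\tilde h\,h$. I would close by noting that the bound is sharp: $[a,b]=a_{0}^{-1}a_{1}$ is not a product of two palindromes, since it is not conjugate to its reversal $a_{-1}a_{0}^{-1}$; together with the above this gives $\pw(G,\{a,b\})=3$.
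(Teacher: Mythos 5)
Your argument is correct and is essentially the paper's own proof: after using \lemref{t3} to write $g$ as a power of $a$ times a commutator $[f,b]$ times a power of $b$, both you and the paper split the conjugate $h^{-1}b^{-1}h$ by inserting the reversed word and absorb the leftover $a^{N}=a_{0}^{N}$ into the resulting symmetric element of $C$, arriving at the same three palindromes $\bigl(h^{-1}b^{-1}\tilde h^{-1}\bigr)\bigl(\tilde h\,h\,a^{N}\bigr)b^{k+1}$. The only cosmetic difference is the normalization (the paper starts from $g=a^{k}b^{l}d$ with $d\in G'$ and conjugates $f$ by $b^{-l}$, while you extract $a^{\log c}$ from $c\in C$), and your closing sharpness remark pertains to \thmref{wrp}, which the paper instead establishes by mapping $G$ onto $\N_{2,2}$.
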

\begin{proof}
Every element in $G$ can be represented in the form
$$g=a^k b^l d, ~ d \in G', k, l \in \Z.$$
By Theorem \ref{t3}, there exists an element $f \in C$ such that $d=[f, b]$. Hence
$$g=a^k b^l [f, b]=a^k [f^{b^{-l}}, b]b^l.$$
Since elements in $C$ are commute then
$$
g=a^k [f^{b^{-l}}, b]b^l = a^k f^{-b^{-l}} b^{-1} f^{b^{-l}} b^{l+1}
= a^k f^{-b^{-l}} b^{-1} \left( \overline{f^{-b^{-l}}} \, \, \overline{f^{b^{-l}}} \right) f^{b^{-l}} b^{l+1} =
$$
$$
= f^{-b^{-l}} b^{-1} \overline{f^{-b^{-l}}} \cdot \overline{f^{b^{-l}}} a^k f^{b^{-l}} \cdot b^{l+1}.
$$
 Hence $g$ is a product of at most $3$ palindromes.
\end{proof}

\subsection{Palindromes in $G$}
The following proposition describes palindromes in $G$.
\begin{prop}\label{pg}
The palindromes in $G$ have one of the following forms:
$$a_{-k}^l f f^{b^{-2k}} b^{2k} \hbox{ or} ff^{b^{-l}} b^l, ~ k, l \in \Z.$$
\end{prop}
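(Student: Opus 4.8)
The plan is to exploit the syllable structure of a palindromic word together with the semidirect decomposition $G=C\ltimes B$ introduced above. Let $p=y_1y_2\cdots y_N$ be a reduced word over $\{a^{\pm1},b^{\pm1}\}$ and write $\widehat{p}:=y_Ny_{N-1}\cdots y_1$ for its letter-reversal, so that $p$ is a palindrome exactly when $p=\widehat{p}$. Let $p=z_1z_2\cdots z_r$ be its (unique) syllable decomposition, the $z_i$ being alternately nonzero powers of $a$ and of $b$. Since each $z_i$ is a power of a single letter, $\widehat{p}=z_rz_{r-1}\cdots z_1$, so uniqueness of the decomposition gives: $p$ is a palindrome iff $z_i=z_{r+1-i}$ for all $i$. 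If $r$ were even the two central syllables $z_{r/2}$ and $z_{r/2+1}$ would be equal yet of opposite type, which is impossible; hence $r$ is odd (the empty word being the trivial palindrome $1$, covered by the first form with $k=l=0$, $f=1$). Thus
$$p=w\cdot z\cdot\widehat{w},\qquad w:=z_1\cdots z_{(r-1)/2},$$
where $z:=z_{(r+1)/2}$ is the central syllable, equal to $a^l$ or to $b^l$ for some $l\in\Z$.

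Next I set up the evaluation in $G$. Every element of $G$ is uniquely $c\,b^n$ with $c\in C$, $n\in\Z$. Letter-reversal induces a well-defined anti-automorphism of $G$ fixing $a$ and $b$ (well-defined because the defining relations $[a_i,a_j]=1$ of $\Z\wr\Z$ are carried to themselves), still written $p\mapsto\widehat{p}$; since $\widehat{a_i}=a_{-i}$, it sends $c\in C$ to the element $\overline{c}\in C$ with $\overline{c}(i)=c(-i)$, and consequently $\widehat{c\,b^n}=b^n\overline{c}=\overline{c}^{\,b^{-n}}b^n$. Writing $w=f\,b^j$ with $f\in C$ and $j\in\Z$, we obtain $\widehat{w}=\overline{f}^{\,b^{-j}}b^j$.

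Now one computes $p=w\,z\,\widehat{w}$, moving all powers of $b$ to the right by repeated use of $b^ka_ib^{-k}=a_{i-k}$. If $z=b^l$, then directly
$$p=(f\,b^j)(b^l)\big(\overline{f}^{\,b^{-j}}b^j\big)=f\,\overline{f}^{\,b^{-(2j+l)}}\,b^{2j+l},$$
which is the second listed form (the exponent there being $2j+l$). If $z=a^l$, then the central syllable is flanked by $b$-syllables, so either $w$ is empty and $p=a^l=a_0^l$ (the first form with $k=0$, $f=1$), or $w$ ends in a $b$-syllable: write $w=u\,b^m$ with $m\neq0$ and $u=f'\,b^{i_0}$, $f'\in C$, $i_0\in\Z$. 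Using $b^ma^lb^m=a_{-m}^l\,b^{2m}$ and again pushing the $b$'s to the right,
$$p=(f'\,b^{i_0})\big(a_{-m}^l\,b^{2m}\big)\big(\overline{f'}^{\,b^{-i_0}}b^{i_0}\big)=a_{-(m+i_0)}^l\;f'\,\overline{f'}^{\,b^{-2(m+i_0)}}\;b^{2(m+i_0)},$$
which is the first listed form with $k=m+i_0$. This accounts for all palindromes.

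The real work is only the index-shift bookkeeping under conjugation by powers of $b$, together with keeping the two cases cleanly apart; the one genuinely structural observation — and the crux of the proof — is that a palindromic word has an odd number of syllables, so that there is always a single well-defined central syllable $z$ to peel off. (Conversely, each element of either displayed shape is represented by a palindrome, obtained by reversing the computations above, but that direction is not needed here.)
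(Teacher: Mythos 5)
Your overall strategy is the same as the paper's: split a palindrome at its central syllable as $w\,z\,\widehat{w}$, write the element represented by $w$ as $f b^{j}$ with $f\in C$, and push the powers of $b$ to the right. Your version is in fact more careful than the paper's on two points: you justify that the syllable count of a palindrome is odd, so that a central syllable exists, and you correctly record what letter-reversal does on $C$, namely that $\widehat{w}$ evaluates to $\overline{f}^{\,b^{-j}}b^{j}$ where $\overline{f}(i)=f(-i)$ (reversal sends $a_{i}=b^{-i}ab^{i}$ to $b^{i}ab^{-i}=a_{-i}$).

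Precisely because of that care, however, your last step does not close. What your computation actually yields is
$$p=f\,\overline{f}^{\,b^{-(2j+l)}}\,b^{2j+l}\qquad\text{resp.}\qquad p=a_{-k}^{\,l}\,f\,\overline{f}^{\,b^{-2k}}\,b^{2k},$$
and you then assert without comment that these are ``the listed forms.'' They are not: the listed forms read $f f^{b^{-l}}$ with the \emph{same} $f$ in both factors, no index reversal, and the two families of elements genuinely differ. For instance $p=b^{-1}ab^{2}ab^{-1}$ is a reduced palindrome whose value in $G$ is $a_{1}a_{-1}$; this equals $f\,\overline{f}$ for $f=a_{1}$, but it cannot be written as $f f^{b^{0}}=f^{2}$ (every exponent of $f^{2}$ is even) nor as $a_{0}^{l}f^{2}$, so it is not covered by the statement as printed. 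Thus the identification made in the final sentence of each of your cases is a real gap --- and it cannot be repaired, because what is off is the printed statement itself: the paper's own proof commits the corresponding slip by writing $\overline{fb^{k}}=b^{k}f$ where it should be $b^{k}\overline{f}$. Your intermediate formulas give the correct classification (the proposition should carry the bar, i.e.\ $\overline{f}^{\,b^{-2k}}$ and $\overline{f}^{\,b^{-l}}$, on the second occurrence of $f$), and with that amendment your argument is complete; as written, you have proved a correct proposition while claiming to have proved a slightly different, and in fact false, one.
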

\begin{proof}
Since any element in $G$ has the form $fb^k$, $f \in C$, $k \in Z$, then any palindrome has one of the following forms
$$fb^ka^l \overline{fb^k}, ~ fb^k b^l \overline{f b^k}.$$
Since $C$ is an abelian group, it follows that $\overline{fb^k}=b^k f$. Hence
$$fb^k a^l \overline{fb^k}=fb^k a^l b^k f=f(a^l)^{b^{-k}} b^{2k} f=fa_{-k}^l f^{b^{-2k}} b^{2k} = a_{-k}^l f f^{b^{-2k}} b^{2k},$$
and
$$fb^k b^l \overline{fb^k} =fb^{2k+l} f = f f^{b^{-(2k+l)}} b^{2k +l}.$$
This proves the lemma.
\end{proof}
We will call palindromes of the first type by \emph{$a$-palindromes} and the palindromes of the second kind by $b$-palindromes. We see that we can find elements in $G$ that are not a product of $a$-palindromes, nor $b$-palindromes. It would be interesting to verify whether every element of $G$ is a product of an $a$-palindrome and a product of a $b$-palindrome.

\begin{theorem}\label{wrp}
$\pw(G, \{ a, b \}) = 3$.
\end{theorem}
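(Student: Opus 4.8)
The plan is to combine the upper bound $\pw(G,\{a,b\})\le 3$ from \corref{wrcc1} with a matching lower bound, namely the existence of an element of $G$ that is not a product of two palindromes. By \propref{pg} every palindrome is either an $a$-palindrome $a_{-k}^l\,f\,f^{b^{-2k}}\,b^{2k}$ or a $b$-palindrome $f\,f^{b^{-l}}\,b^l$; in particular, writing a palindrome as $c\,b^r$ with $c\in C$, the exponent sum $r$ of $b$ is \emph{even} in the $a$-palindrome case and \emph{arbitrary} in the $b$-palindrome case, while the coordinate function $c$ is highly constrained in both. So I would first record the exact shape in coordinates: an $a$-palindrome contributes to $C$ a function of the form $a_{-k}^l\cdot(f\cdot f^{b^{-2k}})$ where $f\cdot f^{b^{-2k}}$ is palindromic as a $\Z$-valued function of the index (symmetric about $-k$), and a $b$-palindrome contributes $f\cdot f^{b^{-l}}$, symmetric about $-l/2$ when $l$ is the $b$-exponent.

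Next I would look for a target element and propose trying something like $g = a\,b\,a$ itself, or more robustly an element whose $C$-part, after stripping off $b^{\text{(total exponent)}}$, cannot be split as a sum of two of the symmetric functions described above with the right parity bookkeeping on the $b$-exponents. Concretely, if $g = p_1 p_2$ with $p_i = c_i b^{r_i}$, then $g = c_1\, c_2^{b^{-r_1}}\, b^{r_1+r_2}$, so the $C$-part of $g$ is $c_1 + (\text{shift of }c_2)$ and the total $b$-exponent is $r_1+r_2$. I would choose $g$ with a small support in $C$ and a prescribed $b$-exponent so that: (i) the $b$-exponent forces at most one of $p_1,p_2$ to be an $a$-palindrome (parity constraint), and (ii) no admissible decomposition of the $C$-part into one "symmetric + monomial correction" function plus one "symmetric" function (appropriately shifted) exists. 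The cleanest candidate is probably an element like $g=a^{\alpha} b a^{\beta}$ or $g = a b a b^{-1} a^{-1}$-type word with carefully chosen exponents; I expect $ba$ or $aba$ will already fail to be a product of two palindromes, and I would verify this by a finite case analysis over the two palindrome types for $p_1$ and $p_2$ (four cases), in each case reducing to an arithmetic contradiction on the symmetry center and on the values $n_i$.

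The main obstacle, and where the real work lies, is the lower bound case analysis: showing robustly that for the chosen $g$ \emph{none} of the four combinations ($a$-$a$, $a$-$b$, $b$-$a$, $b$-$b$) of palindrome types can produce $g$. The $b$-$b$ case is the most delicate because $b$-palindromes already generate a lot: $f f^{b^{-l}} b^l$ has a free choice of $f\in C$ and of $l\in\Z$, so one must use the symmetry of $f f^{b^{-l}}$ about $-l/2$ together with the constraint that the two shifted symmetric functions sum to the prescribed $g$-coordinates, and argue that the two implied centers of symmetry are incompatible with the support of $g$. I would handle this by pinning down, from the total $b$-exponent of $g$, the value $r_1+r_2$, then observing that each summand's center of symmetry is determined (up to the unknown split $r_1,r_2$) and that matching a two-term "sum of symmetric sequences" to a specific short sequence forces relations that fail for a suitable choice; choosing $g$ with an asymmetric, short support (e.g. support $\{0\}$ in $C$ but nonzero $b$-exponent, i.e. $g = a b$) should make this contradiction transparent. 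Once the lower bound $\pw(G,\{a,b\})\ge 3$ is established, combining with \corref{wrcc1} gives $\pw(G,\{a,b\})=3$.
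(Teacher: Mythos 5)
Your upper bound is exactly the paper's: $\pw(G,\{a,b\})\le 3$ from \corref{wrcc1}. The lower bound, however, has a genuine gap, and the concrete candidates you propose are demonstrably wrong. The generators $a^{\pm 1}$ and $b^{\pm 1}$ are themselves one-letter palindromes, so \emph{any} word of length two in the generators is a product of two palindromes: $ab=a\cdot b$ and $ba=b\cdot a$ have palindromic length at most $2$, and $aba$ is itself a single palindrome. So every element you name already fails as a witness. More importantly, the four-case analysis that you correctly identify as ``where the real work lies'' is never carried out, and it is not routine: each factor of the forms in \propref{pg} carries a free choice of $f\in C$ and of the shift, so the family of products of two palindromes is large, and matching a short asymmetric element against ``sums of two shifted symmetric $\Z$-valued functions with a monomial correction'' does not by itself produce a contradiction. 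As written, no element of palindromic length $3$ is exhibited, so the lower bound is unproved.

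The paper sidesteps this analysis entirely: it shows $G/\gamma_3(G)\cong \N_{2,2}$, the free $2$-step nilpotent group of rank $2$, with $a,b$ mapping onto the free generators; then \lemref{onto} gives $\pw(\N_{2,2})\le \pw(G,\{a,b\})$, and the value $\pw(\N_{2,2})=3$ is already established in \cite{BG}. If you want to rescue a direct argument inside $\Z\wr\Z$, you must at minimum choose a witness with nontrivial image modulo $\gamma_3(G)$ --- say $a^{k}b^{m}c$ with $c\in G'$ chosen so that its image in $\N_{2,2}$ is one of the elements known from \cite{BG} not to be a product of two palindromes --- and then either run the full four-case analysis or simply pass to the quotient, at which point you have reproduced the paper's proof.
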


The theorem will follow from the following proposition, \lemref{onto} and \corref{wrcc1}.
\begin{prop}
There is a homomorphism of the group $\Z \wr \Z$ onto $\N_{2,2}$.
\end{prop}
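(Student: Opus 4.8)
The plan is to construct a surjective homomorphism $\varphi\colon G=C\rtimes B\longrightarrow \N_{2,2}$ sending $a$ to $\bar a$ and $b$ to $\bar b$, where $\bar a,\bar b$ denote the standard generators of $\N_{2,2}$ (the free nilpotent group of class $2$ on two generators) and $z:=[\bar a,\bar b]$ generates its centre. Since $\N_{2,2}$ has class $2$, the element $z$ is central, and a short computation gives $\bar b^{-i}\bar a\bar b^{\,i}=\bar a[\bar a,\bar b^{\,i}]=\bar az^{\,i}$ for every $i\in\Z$. As $\bar a,\bar b$ generate $\N_{2,2}$, any such $\varphi$ is automatically onto, so the whole task is to produce $\varphi$.

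First I would define $\varphi$ on the base group $C$. Since $C=\prod_{i\in\Z}A_i$ is free abelian on the elements $a_i$, and by the conjugation rules $a_i=b^{-i}ab^{\,i}$, the only consistent choice is $\varphi(a_i):=\bar az^{\,i}$. This is indeed a well-defined homomorphism $C\to\N_{2,2}$, because $C$ is free abelian and the elements $\bar az^{\,i}$ pairwise commute, all of them lying in the abelian subgroup $\langle\bar a,z\rangle$ ($z$ being central). On $B=\langle b\rangle\cong\Z$ I would simply set $\varphi(b^{\,k}):=\bar b^{\,k}$.

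Next I would splice these maps into a homomorphism of the semidirect product $G=C\rtimes B$. For this one only needs the compatibility $\varphi(c^{\,b})=\varphi(b)^{-1}\varphi(c)\varphi(b)$ for $c\in C$, and it suffices to check this on the generators $a_i$. On one hand $\varphi(a_i^{\,b})=\varphi(a_{i+1})=\bar az^{\,i+1}$, using the action $a_i^{\,b}=a_{i+1}$; on the other hand $\varphi(b)^{-1}\varphi(a_i)\varphi(b)=\bar b^{-1}(\bar az^{\,i})\bar b=(\bar b^{-1}\bar a\bar b)\,z^{\,i}=(\bar az)\,z^{\,i}=\bar az^{\,i+1}$, again because $z$ is central. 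The two agree, so by the universal property of the semidirect product $\varphi$ extends to a homomorphism $G\to\N_{2,2}$; since $\varphi(a)=\varphi(a_0)=\bar a$ and $\varphi(b)=\bar b$, it is onto.

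There is no genuine obstacle here; the one place to be careful is the direction/sign bookkeeping, i.e.\ making sure that the action $a_i^{\,b}=a_{i+1}$ used on $C$ is the one matched by conjugation by $\bar b$ on the images (so that $\bar b^{-1}\bar a\bar b=\bar az$, not $\bar az^{-1}$), which is exactly what makes the compatibility check close up. Alternatively, one can bypass the semidirect product altogether: $G$ admits the presentation $\langle a,b\mid[a,\,b^{-n}ab^{\,n}]=1,\ n\ge 1\rangle$, and in $\N_{2,2}$ each defining relator maps to $[\bar a,\bar az^{\,n}]=[\bar a,\bar a]=1$ because $z$ is central, so the canonical epimorphism $F_2\twoheadrightarrow\N_{2,2}$ factors through $G$.
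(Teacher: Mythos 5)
Your proof is correct, but it goes by a genuinely different route than the paper's. The paper works from the other end: it passes to $G/\gamma_3(G)$, verifies the defining relations of $\N_{2,2}$ there, extracts the relation $a_0^{-1}a_1^2a_2^{-1}=1$, and then eliminates all generators $a_i$ with $i\neq 0,1$ by repeated conjugation, concluding that every element of $G/\gamma_3(G)$ has the normal form $a^{k+l}b^m[b,a]^{-l}$ and hence that $G/\gamma_3(G)\cong \N_{2,2}$. You instead build the epimorphism directly: define $\varphi$ on the free abelian base group $C$ by $a_i\mapsto \bar a z^{\,i}$ (well-defined since these images lie in the abelian subgroup $\langle \bar a,z\rangle$), set $\varphi(b)=\bar b$, and verify the single compatibility condition $\varphi(a_i^{\,b})=\bar b^{-1}\varphi(a_i)\bar b$, which holds because $\bar b^{-1}\bar a\bar b=\bar a z$ and $z$ is central; your computations check out against the paper's conventions ($a_i^{\,b}=a_{i+1}$, $f^b=b^{-1}fb$). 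Your construction is shorter and avoids the generator-elimination bookkeeping, and your closing remark via the presentation $\langle a,b\mid [a,b^{-n}ab^{\,n}]=1,\ n\ge 1\rangle$ is an equally clean one-line variant. What the paper's longer computation buys is the stronger structural statement that the third lower-central quotient $G/\gamma_3(G)$ is \emph{isomorphic} to $\N_{2,2}$ (not merely that $\N_{2,2}$ is a quotient of $G$); your map recovers this too with one extra observation -- $\gamma_3(G)\le\ker\varphi$, so $\varphi$ factors through $G/\gamma_3(G)$, which is a $2$-generated class-$\le 2$ nilpotent group and hence also a quotient of the Hopfian group $\N_{2,2}$ -- but for the proposition as stated, which only asks for a surjection, your argument is complete as written.
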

\begin{proof}

The group $\N_{2,2}$ has the following presentation:
\begin{equation} \label{n22} \N_{2,2} = \langle x, y \ | \ [y, x, x]=[y, x, y]=1 \rangle.\end{equation}
Consider the quotient $G/\gamma_3(G)$. We use the same letters $a$ and $b$ for the images of $a$ and $b$ in
$G/\gamma_3(G)$. Hence we have the following relations in $G/\gamma_3(G)$:
\begin{equation} \label{zzr} [b, a, a]=1, ~ [b, a, b]=1. \end{equation}

Since $[b, a]=b^{-1} a^{-1} b a=a_1^{-1} a = a_1^{-1} a_0$, we have
$[b, a, a]=[a_1^{-1} a_0, a_0]=1$, which is the  first relation in \eqnref{zzr}. The second relation has the form
$$[b, a, b]=[a_1^{-1} a_0, b]=a_0^{-1} a_1 b a_1^{-1} a_0 b = a_0^{-1} a_1 a_2^{-1} a_1 = a_0^{-1} a_1^2
a_2^{-1} =1, $$
i.e.
\begin{equation}\label{zzr2} a_0^{-1} a_1^2 a_2^{-1} =1.\end{equation}
We can write $a_2$ in the form
$$a_2=a_0^{-1} a_1^2.$$
Conjugating this relation by $b$ we have
$$a_3=a_1^{-1} a_2^2=a_1^{-1} (a_0^{-1} a_1^2)^2=a_0^{-2} a_1^3.$$
Conjugating this relation by $b$ we have
$$a_4=a_1^{-2} a_2^3=a_1^{-2} (a_0^{-1} a_1^2)^3=a_0^{-3} a_1^4.$$
Continuing this process we can remove all generators $a_2, a_3, \ldots, a_k, \ldots$  in the image of the group $C$ in $G/\gamma_3(G)$.
On the other hand, conjugating the relation \eqnref{zzr2} by $b^{-1}$ gives
$$a_{-2}=a_{-1} ^2 a_0^{-1} = (a_0^2 a_1^{-1} )^2 a_0^{-1} = a_0^3 a_1^{-2}.$$
Continuing this process we can remove all generators $a_{-1}, a_{-2}, \ldots, a_{-k}, \ldots$ in the image of
$C$ in $G/\gamma_3(G)$. Hence, any element $g$  in $G/\gamma_3(G)$ has the form
$$g=a_0^k a_1^l b^m=a^k b^{-1} a^l b b^m = a^k a^l (a^{-l} b^{-1} a^l b) b^m = a^{k+l} [a^l, b] b^m.$$
Since in $G/\gamma_3(G)$,  commutator $[a^l, b]=[a, b]^l$ commutes with $b^l$, thus
$$g=a^{k+l} b^m [b, a]^{-l}.$$
This shows that $G/\gamma_3(G)$ is isomorphic to $\N_{2,2}$. This proves the proposition.
\end{proof}

\section{ Open Questions} \label{oq}

We proved that for group $G$ in  Theorems \ref{sol1} and \ref{unn} the palindromic width
$\pw(G)$ is finite.

\begin{question}
Let $G$ be a group in  Theorems \ref{sol1} and \ref{unn}. Find the lower and upper bounds on the
palindromic width
$\pw(G)$.
\end{question}

In the last section we proved that the palindromic width of a solvable Baumslag-Solitar group is equal 2. For the
non solvable groups we can formulate the following.

\begin{question}
Let
$$
BS(m,n) = \langle a, t~||~t^{-1} a^m t = a^n \rangle,~~m, n \in \mathbb{Z} \setminus \{0\}
$$
be a non solvable Baumslag-Solitar group. Is it true that $\pw(BS(m,n), \{a, t \})$ is infinite?
\end{question}

Also, we can formulate the general question on the palindromic width of HNN-extensions.

\begin{question}
Let $G = \langle X \rangle$ be a  group and $A$ and $B$ are proper isomorphic subgroups of
$G$ and $\varphi : A \rightarrow B$ be an isomorphism. Is it true that the HNN-extension
$$
G^* = \langle G, t~||~t^{-1} A t = B, \varphi \rangle
$$
of $G$ with associated subgroups $A$ and $B$ has infinite palindromic width with respect to the generating set
$X \cup \{ t \}$?
\end{question}

In \cite{BT} palindromic widths of free products were investigated. For the generalized free products we can
formulate:

\begin{question}
Let $G = A*_C B$ be a free product of $A$ and $B$ with amalgamated subgroup $C$ and $|A : C| \geq 3$,
$|B : C| \geq 2$.
 Is it true that $\pw(G, \{A, B \})$ is infinite?
\end{question}

It would be interesting to understand palindromic width of wreath products. In particular, one can ask the following.

\begin{question}
Let $G = A \wr B$ be a wreath product of group $A = \langle X \rangle$ and $B = \langle Y \rangle$ such that
$\pw(A, X) < \infty$ and $\pw(B, Y) < \infty$.
 Is it true that $\pw(G, X \cup Y) < \infty$?
\end{question}

Riley and Sale \cite{RS} have proved that if $B$ is a finitely generated abelian group then the answer is
positive. Fink \cite{fink} has also proved the finiteness of palindromic width of wreath product in for some
more cases. However, the general case when $B$ is a finitely generated non-abelian group is still open.

\end{document}